\definecolor{gray}{gray}{0.6}
\theoremstyle{plain}
\newtheorem{thm}{Theorem} 
\newtheorem{lem}[thm]{Lemma}
\theoremstyle{remark}
\newtheorem{remark}{Remark}
\newtheorem*{remark*}{Remark}
\newcommand{\vertiii}[1]{{\left\vert\kern-0.25ex\left\vert\kern-0.25ex\left\vert #1 
		\right\vert\kern-0.25ex\right\vert\kern-0.25ex\right\vert}}
\newcommand\norm[1]{\left\lVert#1\right\rVert}
\begin{document}

\title{Error indicator for the incompressible Darcy flow problems using Enhanced Velocity \\ Mixed Finite Element Method.}


\author[1,2]{Yerlan Amanbek}
\author[1]{Gurpreet Singh}
\author[1]{Gergina Pencheva}
\author[1]{Mary F. Wheeler}
\affil[1]{Oden Institute for Computational Engineering and Sciences, University of Texas at Austin}
\affil[2]{Nazarbayev University}
\affil[ ]{\textit{yerlan.amanbek@nu.edu.kz, \{gurpreet, gergina \}@utexas.edu, mfw@ices.utexas.edu}}


\date{\today}

\maketitle              
\begin{abstract}
	In the flow and transport numerical simulation, mesh adaptivity strategy is important in reducing the usage of CPU time and memory. The refinement based on the pressure error estimator is commonly-used approach without considering the flux error which plays important role in coupling flow and transport systems. 
	We derive a \textit{posteriori} error estimators for Enhanced Velocity Mixed Finite Element Method (EVMFEM) in the incompressible Darcy flow. 
	We show numerically difference of the explicit residual based error estimator and implicit error estimators, where Arbogast and Chen post-processing procedure from \cite{arbogast1995implementation} for pressure was used to improve estimators. 
	A residual-based error estimator provides a better indicator for pressure error. Proposed estimators are good indicators in finding of the large error element. Numerical tests confirm theoretical results.
	We show the advantage of pressure postprocessing on the detecting of velocity error. To the authors' best knowledge, a posteriori error analysis of EVMFEM has been scarcely investigated from the theoretical and numerical point of view. 
\end{abstract}
\begin{keyword}
	a posteriori error analysis, enhanced velocity mixed finite element method, error estimates, adaptive mesh refinement.
\end{keyword}

\section{Introduction}
In subsurface problems, a computational saving of numerical simulation can be achieved by reducing degrees of freedom in the linear system. Appropriate reduction of degrees of freedom of the problem by controlling error of approximation is often handled via a \textit{posteriori} error analysis. Such a special assessment of errors provides basis for mesh refinement or unrefinement strategy. 
A \textit{posteriori} error estimator and \textit{Error Indicator} of elements is essential to determine the large error element which can be considered for refinement to achieve the accurate and efficient subsurface simulations. By knowing the assessment of error provided by estimators, one can control of discretization error and achieve the anticipated quality of the numerical solution.  


A posteriori error estimators for finite element method for elliptic boundary value problems started by the Babuska and Rheinbodt work in \cite{babuvska1978posteriori}. The main objective of a posteriori error estimation is to obtain the estimator that is close to the error in specific (e.g. energy) norm on each element \cite{ainsworth2011posteriori}. 
The literature on a \textit{posteriori} error estimates and adaptivity for mixed finite element approximation has highlighted several forms of estimates for mesh refinement strategy. To derive estimates for many finite element approximation, many researchers have proposed various methods of optimal a \textit{posteriori} error estimate in \cite{ainsworth2011posteriori, verfurth2013posteriori, estep2000estimating, larson2008posteriori}. In particularly, conforming mixed finite element method were explored in \cite{vohralik2010unified, wohlmuth1999residual, vohralik2007posteriori,zienkiewicz1987simple} as well as Discontinuous Galerkin (DG) method employing explicit error estimate in \cite{sun2007discontinuous} and implicit error estimate in \cite{riviere2003posteriori}, goal-oriented Discontinuous Petrov-Galerkin(DPG) \cite{keith2017goal} with applications \cite{fuentes2017using}, for nonconforming FEM in \cite{carstensen2007unifying, ern2015polynomial,ainsworth2005robust} such as Multiscale Mortar Mixed FEM  in \cite{pencheva2013robust, sun20052, wheeler2005posteriori, arbogast2014posteriori,arbogast2015posteriori}. However, to the best of our knowledge, a \textit{posteriori} error analysis has not been conducted for Enhanced Velocity Mixed FEM, which is practical method in adaptive setting for many subsurface applications \cite{amanbek2019adaptive,thomas2011enhanced, wheeler2002enhanced,singh2017adaptive,amanbek2018priori}.

For large domain with heterogeneous permeability $\mathbf{K}$, which varies over scales from mm to hundreds kms, it is computationally intensive task.  Direct discretization in fine scale of entire domain would involve high resolution permeability $\mathbf{K}$ of $\Omega$ that resulting a large and coupled system of equation. Such system solution would usually be CPU and Memory demanding.

Recent development for flow and transport in heterogeneous porous media, adaptive numerical homogenization using Enhanced Velocity Mixed Finite Element Method (EVMFEM) in \cite{amanbek2019adaptive,amanbek2018new,singh2017adaptive}, motivates us to study a \textit{posteriori} error estimates in the adaptive mesh refinement strategy. The use of adaptive numerical homogenization captures fine scale features in efficient way for heterogeneous porous media. This method can be successfully used for a number of subsurface engineering applications to achieve efficient and accurate numerical solution. 

%
%
%
%
%
%
%
%

The main purpose of this paper is to provide a \textit{posteriori} error estimates for single phase flow using EVMFEM. We show theoretical derivations of estimators and then numerical results for selected indicators to confirm theoretical upper bounds. Velocity error is taken into account to increase accuracy of velocity that are important in transport equations. We consider nested version of mesh discretization at the interface for numerical experiments. First, we derive the explicit residual-based a \textit{posteriori} error estimates for EVMFEM with saturation assumption. Second, we show the implicit error estimates without saturation assumption using a suitable post-processing of the finite element pressure approximation that is better indicator for mesh refinement. We show estimates in $L^2$ norm. For simplicity, the problem is considered with Dirichlet boundary conditions, but the result can be generalized.

The remainder of this paper is organized as follows. Section 2 outlines formulation of EVMFEM and preliminaries. Our a posteriori error estimates are presented in Section 3. In this section, we start by going through briefly preliminaries and useful inequalities, formulation and projections. Second, residual-based explicit error estimators were derived for the pressure and velocity errors. Then, residual-based estimators with smoothing were derived to improve the indicators of velocity error by using post-processed pressure such as the Arbogast and Chen postprocessing \cite{arbogast1995implementation}. Section 4 shows computational results. The conclusion is reported in Section 5.
\section{Model formulation} \label{modelformulation}
We start by giving the model formulation for the incompressible single-phase flow. For the convenience of reader we repeat the relevant material of domain decomposition method, discrete formulation with Enhanced Velocity from \cite{wheeler2002enhanced}. 
EVMFEM is a mass conservative and an efficient  domain decomposition method which deals with non-matching grids or multiblock grids \cite{wheeler2002enhanced}. This method is extensively used in many complex multicomponenet, multicomponent, multiphase flow and transport processes in porous media \cite{thomas2011enhanced}. EVMFEM approach is strongly mass conservative at the interfaces and impose strong continuity of fluxes between the subdomains. Earlier implementations \cite{wheeler2002enhanced,thomas2011enhanced} employed a solution approach where only the coarse and fine domain contributions to the stiffness-matrix (or Jacobian matrix) were taken, neglecting interface contributions. The load vector (or residuals); however, contains contributions from both the coarse and fine subdomains as well as the interface. This resulted in an increase in the number of non-linear iterations to achieve convergence, for a given tolerance, even for a linear flow and transport problem. In this work, we use a fully coupled variant of the original EVMFEM approach wherein the interface terms are properly accounted for in the stiffness-matrix construction resulting in reduced non-linear iterations (one for a linear system).

\subsection{Governing Equations of the Incompressible Flow }
For convenience of analysis, we consider the incompressible single phase flow model for pressure $p$ and the Darcy velocity $\textbf{u}$: 
\begin{align} 
\textbf{u} &=-\mathbf{K} \nabla p \qquad \text{in} \quad \Omega,  \label{eq:a} \\ 
\nabla \cdot \textbf{u} &=f \qquad \qquad \text{in} \quad \Omega, \label{eq:b}  \\ 
p &=g \qquad \qquad \text{on} \quad \partial \Omega   \label{eq:c} 
\end{align}
where $\Omega \in \mathbb{R}^d( d=2$ or $3$) is multiblock domain, $f \in L^2(\Omega)$  and  $\mathbf{K}$ is a symmetric, uniformly positive definite tensor representing the permeability divided by the viscosity with $L^{\infty}(\Omega)$ components, for some $0<k_{min}<k_{max} < \infty$
\begin{align}
k_{min} \xi ^T \xi \le \xi^T \mathbf{K}(x) \xi \le k_{max} \xi^T \xi \qquad \forall x \in \Omega \quad \forall \xi \in \mathbb{R}^d.
\end{align}

Let $\Omega$ be divided into a series of small subdomains. For simplicity, the Dirichlet boundary condition is considered as zero, i.e. $g=0$. To formulate in mixed variational form, Sobolev spaces are exploited and the following space is defined for flux in $\mathbb{R}^d$ as usual to be
$\textbf{V}=H({\rm div}; \Omega) =\{\mathbf{v}  \in \left(L^2(\Omega)\right)^d: \nabla \cdot \mathbf{v}  \in L^2(\Omega)\} $
and equipped with the norm $ \norm{\mathbf{v} }_V=\left(\norm{\mathbf{v} }^2+\norm{\nabla \cdot \mathbf{v} }^2\right)^{\frac{1}{2}} $
and for the pressure   the space is 
$W =L^2(\Omega)$ and the corresponding norm $\norm{w}_W=\norm{w}$.

We utilize standard notations. For subdomain $\zeta \subset \mathbb{R}^d$, the $L^2(\zeta)$ inner product (or duality pairing) and norm are denoted by $(\cdot, \cdot)_S$ and $\norm{\cdot}_{\zeta}$, respectively, for scalar and vector valued functions. Let $W^{m,p}$ be the standard Sobolev space of $m$-differentiable functions in $L^p(\zeta)$. Let $\norm{\cdot}_{m,\zeta}$ be norm of $H^{m}(\zeta) = W^{m,2}(\zeta)$ or $H^{m}(\zeta)$, where $\zeta$ and $m$ are omitted in case of $\zeta = \Omega$ and $m=0$ respectively, in other cases they are specified. We write $(\cdot, \cdot)$ for the $L^2(\zeta)$ or $\left(L^2(\zeta)\right)^d$ inner product, and $\langle \cdot, \cdot \rangle_{\partial \zeta}$ for duality pairing on boundaries and interfaces, where the pairing may be between two functions in $L^2$ or between elements of $H^{1/2}$ and $H^{-1/2}$, in either order.

Next, a weak variational form of the fluid flow problem $(\ref{eq:a})-(\ref{eq:c})$ is to find a pair $\textbf{u} \in \mathbf{V}$, $p \in W$
\begin{align} 
\left(\mathbf{K}^{-1}\textbf{u}, \mathbf{v} \right) -\left(p, \nabla \cdot \mathbf{v} \right) &= - \langle g, \mathbf{v} \cdot \nu \rangle_{\partial \Omega} \qquad & \forall \mathbf{v} \in \textbf{V} \label{eq:2_4} \\ 
\left(\nabla \cdot \textbf{u}, w \right) &=\left(f,w\right) \qquad \qquad &\forall w \in W \label{eq:2_5}  
\end{align}
where $\nu$ is the outward unit normal to $\partial \Omega$. 

\begin{figure}[H]
	\centering
	\begin{tikzpicture}[thick,scale=1.5] 
	
	\def\xa{-2}
	\def\xb{0}
	\def\xc{3}
	
	\def\ya{-2}
	\def\yb{-1}
	\def\yc{0}
	\def\yd{1}
	\def\ye{3}
	\def\yf{4}
	
	\coordinate (A1) at (\xa, \yd);
	\coordinate (A2) at (\xb, \yd);
	\coordinate (A3) at (\xb, \yf);
	\coordinate (A4) at (\xa, \yf);
	\coordinate (B1) at (\xa, \yb);
	\coordinate (B2) at (\xb, \yb);
	\coordinate (C1) at (\xb, \ya );
	\coordinate (C2) at (\xc, \ya);
	\coordinate (C3) at (\xc, \ye);
	\coordinate (C4) at (\xb, \ye);
	\coordinate (D1) at (\xb, \yc);
	\coordinate (D2) at (\xc, \yc);

	\draw[fill=blue!30,opacity=0.4] (A1) -- (A2) -- (A3) -- (A4)--cycle;
	\draw[fill={rgb:orange,1;yellow,2;pink,5},opacity=0.6] (C1) -- (C2) -- (D2) -- (D1)--cycle;
	\draw[fill={rgb:orange,1;yellow,2;pink,5},opacity=0.3] (D1) -- (D2) -- (C3) -- (C4)--cycle;
	\draw[fill={rgb:orange,1;yellow,1;red,2},opacity=0.4] (B1) -- (B2) -- (A2) -- (A1)--cycle;
	
	
	\draw[step=5mm,black,thick,dashed] (A1) grid (A3); 
	\draw[step=10mm,black, thick,dashed] (B1) grid (A2); 
	\draw[step=5mm,black,thick,dashed] (C1) grid (D2); 
	\draw[step=6mm,black, thick,dashed] (D1) grid (C3);

	\draw[black,line width=1.5mm,ultra thick] (B1) rectangle (A2);
	\draw[black,line width=1.5mm,ultra thick] (C1) rectangle (D2);
	\draw[black,line width=1.5mm,ultra thick] (D1) rectangle (C3);
	\draw[black,line width=1.5mm,ultra thick] (A1) rectangle (A3);
	
	\fill[red!50,ultra thick] (\xb-0.025,\yb) rectangle (\xb+0.025,\ye);
	\fill[red!50,ultra thick] (\xa,\yd-0.025) rectangle (\xb,\yd+0.025);
	\fill[red!50,ultra thick] (\xb,\yc-0.025) rectangle (\xc,\yc+0.025);

	\node [above right] at (B1) { $\Omega_1$};
	\node [above right] at (A1) {$\Omega_2$};
	\node [above right] at (D1) {$\Omega_3$};
	\node [above right] at (C1) {$\Omega_4$};
	\node [below left] at (C3) {$\mathcal{T}_h$};

	\draw[-latex,thick](\xc+1,\ye)node[above]{$\partial \Omega$} to[out=270,in=0] (\xc,\ye-1);
	\draw[-latex,thick](\xc+1,\yb)node[above]{$\Gamma$}  to[out=180,in=270] (\xc-1.4,\yc);	
	\end{tikzpicture}
	\caption{Illustration of a domain $\Omega$ with subdomains $\Omega_i$ and non-matching mesh discretization $\mathcal{T}_h$.}
\end{figure}
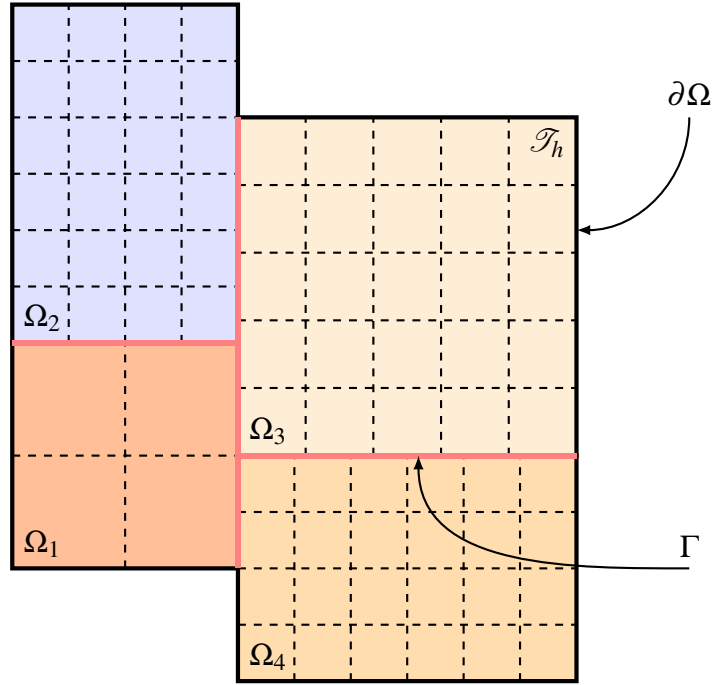

\subsubsection*{Discrete formulation}
We consider 
\begin{align*}
\Omega = \left( \bigcup_{i=1}^{N_b} \bar{\Omega}_i \right)^{o}, \qquad \Gamma_{i,j}=\partial \Omega_i \bigcap \partial \Omega_j, \quad \Gamma = \left( \bigcup^{N_b}_{i,j=1} \bar{\Gamma}_{i,j} \right)^{o}, \quad \Gamma_i = \Omega_i \bigcap \Gamma=\partial \Omega_j \setminus \partial \Omega.
\end{align*}
This implies that the domain is divided into $N_b$ subdomains, the interface between $i^{th}$ and $j^{th}$ subdomains($i \ne j$), the interior subdomain interface for $i^{th}$ subdomain and union of all such interfaces, respectively.

Let $\mathcal{T}_{h,i}$ be a conforming, quasi-uniform and rectangular partition of $\Omega_i$, $1 \le i \le N_b$, with maximal element diameter $h_i$. We then set $\mathcal{T}_{h}=\cup_{i=1}^{n} \mathcal{T}_{h,i} $ and denote $h$ the maximal element diameter in $\mathcal{T}_h$; note that $\mathcal{T}_h$ can be nonmatching as neighboring meshes $\mathcal{T}_{h,i}$ and $\mathcal{T}_{h,j}$ need not match on $\Gamma_{i,j}$. We assume that all mesh families are shape-regular.

We narrow our work to the regularly used Raviart-Thomas spaces of lowest order on rectangles for $d=2$ and bricks for $d=3$. The $RT_0$ spaces are defined for any element $T \in \mathcal{T}_{h}$ by the following spaces:
\begin{align*}
&\mathbf{V}_h(T)=\{\mathbf{v}=(v_1,v_2) \; \text{or} \; \mathbf{v}=(v_1,v_2,v_3): v_l=\alpha_l+\beta_l x_l: \alpha_l, \beta_l \in \mathbb{R};l=1,..d \} \quad \text{and} \\
&W_h(T) =\{w=\text{constant}\}
\end{align*}
In fact, a vector function in $\mathbf{V}_h$ can be determined uniquely by its normal components $\mathbf{v} \cdot \nu $ at midpoints of edges (in 2D) or face (in 3D) of $T$. The degrees of freedom of $\mathbf{v} \in \mathbf{V}_h(T)$ were created by these normal components. The degree of freedom for a pressure function $p \in W_h(T)$ is at center of $T$ and piecewise constant inside of $T$. 
The pressure finite element approximation space on $\Omega$ is taken to be as
\begin{equation*}
W_h(\Omega) =\{w\in L^2(\Omega): w \biggr\rvert_E \in W_h(T), \forall T \in \mathcal{T}_h \}
\end{equation*}

We first construct a velocity finite element approximation space on $\Omega$, which is different from the velocity space of the Multiscale Mortar Mixed FEM. Let us formulate $RT_0$ space on each subdomain $\Omega_i$ for partition $\mathcal{T}_h$
\begin{align*}
\mathbf{V}_{h,i}=\{\mathbf{v} \in H({\rm div};\Omega_i): \mathbf{v} \biggr\rvert_T \in \mathbf{V}_h(T), \forall T \in \mathcal{T}_{h,i} \}  \qquad i \in \{1,...n\}
\end{align*}
and then
\begin{align*}
\mathbf{V}_{h}= \bigoplus_{i=1}^n \mathbf{V}_{h,i}.
\end{align*}
Although the normal components of vectors in $\mathbf{V}_h$ are continuous between elements within each subdomains, the reader may see $\mathbf{V}_h$ is \textit{not} a subspace of $H({\rm div}; \Omega)$, because the normal components of the velocity vector may not match on subdomain interface $\Gamma$. To solve this issue, many researchers have proposed various methods such as Multiscale Mortar Mixed FEM \cite{arbogast2007multiscale}, Enhanced Velocity Mixed FEM  \cite{wheeler2002enhanced}, etc. In Mortar Multiscale Mixed FEM , the mortar finite element space on coarse grid was introduced to connect subdomains together using Lagrange multipliers to enforce weak continuity for flux across subdomains. On the other hand, the Enhanced Velocity Mixed FEM modifies the degree of freedom on $\Gamma$ to finer grids, which impose the strong flux continuity between subdomains. Let us define $\mathcal{T}_{h,i,j}$ as the intersection of the traces of $\mathcal{T}_{h,i}$ and $\mathcal{T}_{h,j}$, and let $\mathcal{T}^{\Gamma}_h=\bigcup_{1 \le i \le j \le N_b} \mathcal{T}_{h,i,j}$. We require that $\mathcal{T}_{h,i}$ and $\mathcal{T}_{h,j}$ need to align with the coordinate axes. Fluxes are constructed to match on each element $e \in \mathcal{T}^{\Gamma}_h$. We consider any element $T \in \mathcal{T}_{h,i}$ that shares at least one edge with the interface $\Gamma$, i.e., $T \cap \Gamma_{i,j} \ne \emptyset$, where $1 \le i,j \le N_b$ and $i \ne j$. Then newly defined interface grid introduces a partition of the edge of $T$. This partition may be extended into the element $T$ as shown in Figure \ref{fig:EVMFEM_Gamma}.

\begin{figure}[!tbp]
	\centering	
	\begin{tikzpicture}[thick,scale=1.5, dot/.style = {outer sep = +0pt, inner sep = +0pt, shape = circle, draw = black, label = {#1}},
	small dot/.style = {minimum size = 1pt, dot = {#1}},
	big dot/.style = {minimum size = 8pt, dot = {#1}},
	line join = round, line cap = round, >=triangle 45
	]
	
	\coordinate (A1) at (0, -1);
	\coordinate (A2) at (0, 1);
	\coordinate (A3) at (2, 1);
	\coordinate (A4) at (2, -1);
	
	\coordinate (B1) at (-2.6,-2);
	\coordinate (B2) at (-2.6, 1.5);
	\coordinate (B3) at (0,1.5);
	\coordinate (B4) at (0,-2);
	
	\coordinate (C1) at (-2.6,0);
	\coordinate (C2) at (0, 0);
	\coordinate (C3) at (2,0);
	
	\coordinate (D1) at (-2.6, -1);
	\coordinate (D2) at (-2.6,1);

	\fill[fill={rgb:orange,1;yellow,2;pink,5}, opacity=0.2] (A1) rectangle (A3); 
	
	\draw[very thick] (A1) -- (A2) -- (A3) -- (A4) -- cycle;
	
	\draw[fill={rgb:orange,1;yellow,2;blue,2},opacity=0.2, very thick] (B1) -- (B4)-- (C2)--(C1);
	\draw[fill={rgb:orange,2;yellow,1;green,1},opacity=0.2, very thick] (C1) -- (C2)-- (B3)--(B2);
	
	\draw[very thick] (B4) -- (B3);
	\draw[very thick] (B3) -- (B2);
	\draw[very thick] (C1) -- (C2);
	\draw[dotted, color =red] (C2) -- (C3);
	\draw[dotted, color =red] (A2) -- (D2);
	\draw[dotted, color =red] (A1) -- (D1);
	\draw[very thick] (B1) -- (B4);
	\draw[very thick] (B1) -- (B2);

	\node[font = \large, color =red] (e1)  at (0, 0.5) { \textbf{$\times$}};
	\node[font = \large, color =red] (e2)  at (0,-0.5) {$\times$};	
	\node[font = \large ] (e3)  at (1, -1) {$\times$};
	\node[font = \large] (e4)  at (2, 0) {$\times$};
	\node[font = \large] (e5)  at (1, 1) {$\times$};
	
	\node [above, font = \large] at (0,-2.7) { $\Gamma_{i,j}$};
	\node [above left, font = \large] at (0, 0.5) { $e_1$};
	\node [above left, font = \large] at (0,-0.5) { $e_2$};
	\node [right, font = \large] at (1, 0.5) { $T_1$};
	\node [right, font = \large] at (1,-0.5) { $T_2$};
	
	\end{tikzpicture}
	\caption{Degrees of freedom for the Enhanced Velocity space.}
	\label{fig:EVMFEM_Gamma}
\end{figure}

This new partitioning helps to construct fine-scale fluxes that is in $H(\textbf{ div}, \Omega)$. So we represent a basis function $\mathbf{v}_{T_k}$ in the $\mathbf{V}_h(T_k)$ space ($RT_0$) for given $T_k$ with the following way: 
\begin{align*}
\mathbf{v}_{T_k} \cdot \nu =
\begin{cases}
1, \qquad {\rm on} \; e_k \\
0, \qquad \rm other \; edges
\end{cases}
\end{align*}
i.e. a normal component $\mathbf{v}_{T_k} \cdot \nu $ equal to one on $e_k$ and zero on all other edges(faces) of $T_k$.
Let $\mathbf{V}^{\Gamma}_h$ be span of all such basis functions defined on all sub-elements induced the interface discretization $\mathcal{T}_{h, i,j}$. Thus, the enhanced velocity space $\mathbf{V}^*_h$ is taken to be as
\begin{align*}
\mathbf{V}^*_{h}= \bigoplus_{i=1}^n \mathbf{V}^0_{h,i} \bigoplus \mathbf{V}^{\Gamma}_{h} \cap H({\rm div}; \Omega).
\end{align*}
where $\mathbf{V}^0_{h,i} = \{ \mathbf{v} \in \mathbf{V}_{h,i} : \mathbf{v} \cdot \nu =0  \text{  on  } \Gamma_{i}\}$ is the subspace of $\mathbf{V}_{h,i}$. The finer grid flux allows to velocity approximation on the interface and then form the $H({\rm div}, \Omega)$ conforming velocity space. Some difficulties arise, however, in analysis of method and implementation of robust linear solver for such modification of $RT_0$ velocity space at all elements, which are adjacent to the interface $\Gamma$.
We now formulate the discrete variational form of equations $(\ref{eq:a})-(\ref{eq:c})$ as: Find  $\mathbf{u}_h \in \mathbf{V}^*_{h} $ and $p_h \in W_h$ such that
\begin{align} 
\left(K^{-1}\textbf{u}_h, \mathbf{v} \right) &=\left(p_h, \nabla \cdot \mathbf{v} \right)- \langle g, \mathbf{v} \cdot \nu \rangle_{\partial \Omega} \qquad & \forall \mathbf{v} \in \textbf{V}^*_h  \label{eq:EV_eq1} \\ 
\left(\nabla \cdot \textbf{u}_h, w \right) &=\left(f,w\right) \qquad \qquad &\forall w \in W_h\label{eq:EV_eq2}  
\end{align}

\section{Methodology of Error Estimate}\label{sec:method}

In this section, we derive error estimators for enhanced velocity mixed finite element discretization of elliptic problems. First, we briefly go through preliminaries and useful inequalities, formulation and projections. Second, residual based explicit error estimators were derived for pressure and velocity. Third, residual based estimators with smoothing were derived to improve the indicators of velocity error by using post-processed pressure such as the Arbogast and Chen postprocessing.

\subsection*{Representation of error}

We define 
\begin{align*}
e_{\mathbf{u}} = \mathbf{u}-\mathbf{u}_h \quad \text{ and } \quad  e_p =p - p_h .
\end{align*}
%
We introduce the bilinear form $\mathcal{A}(\mathbf{u}, p ; \mathbf{v}, w)$ defined as
\begin{align*} 
\mathcal{A}(\mathbf{u}, p ; \mathbf{v}, w) = \left(\mathbf{K}^{-1}\textbf{u}, \mathbf{v} \right) - \left(p, \nabla \cdot \mathbf{v} \right) + \mu \left(\nabla \cdot \textbf{u}, w \right)  
\end{align*}
where $\mu = 1$ or $\mu=-1$. We denote $\mathcal{A}(\cdot, \cdot)$ as $\mathcal{A}_s(\cdot, \cdot)$ when $\mu = -1$, which is a symmetric bilinear form and  $\mathcal{A}(\cdot, \cdot)$ as $\mathcal{A}_c(\cdot, \cdot)$ when $\mu = 1$, which is a nonsymmetric, but coercive, since $\mathcal{A}_c(\mathbf{v}, w; \mathbf{v}, w)=(\mathbf{K}^{-1} \mathbf{v}, \mathbf{v})$. Linear functional $L(\mathbf{v}, w)$ is defined as
\begin{align*} 
L(\mathbf{v}, w) = \mu (f, w) -\langle g, \mathbf{v} \cdot \nu \rangle 
\end{align*}
where $\mu = 1$ or $\mu=-1$.  Note that the solution does not depend on the choice of $\mu$.
Therefore, weak variational form of (\ref{eq:2_4})-(\ref{eq:2_5}) imply that $(\mathbf{u}, p) \in \mathbf{V} \times W$ satisfy 
\begin{align*} 
\mathcal{A}(\mathbf{u}, p ; \mathbf{v}, w) = L(\mathbf{v}, w)  \qquad \qquad (\mathbf{v}, w) \in \mathbf{V} \times W
\end{align*}
and the discrete variational formulation in (\ref{eq:EV_eq1})- (\ref{eq:EV_eq2}) implies that 
\begin{align} \label{eq:weakdiscreteform} 
\mathcal{A}(\mathbf{u}_h, p_h ; \mathbf{v}, w) = L(\mathbf{v}, w)  \qquad \qquad (\mathbf{v}, w) \in \mathbf{V}^*_h \times W_h
\end{align}
Then using above notations we obtain the residual equation
\begin{align}\label{eq:residualform} 
\mathcal{A}(e_{\mathbf{u}}, e_p ; \mathbf{v}, w) = L(\mathbf{v}, w) - \mathcal{A}(\mathbf{u}_h, p_h ; \mathbf{v}, w)   \qquad \qquad (\mathbf{v}, w) \in \mathbf{V} \times W
\end{align}
hold true  for each pair $(e_{\mathbf{u}}, e_p) \in \mathbf{V} \times W$. Thus, (\ref{eq:weakdiscreteform}) and (\ref{eq:residualform}) give the orthogonality condition
\begin{align}
\mathcal{A}(e_{\mathbf{u}}, e_p ; \mathbf{v}, w) = 0 \qquad \qquad (\mathbf{v}, w) \in \mathbf{V}^*_h \times W_h
\end{align}

\subsection*{Preliminaries and useful inequalities}

We assume the model problem is $H^2$-regular, in other words, there exists a positive C that depends on $\mathbf{K}$ and $\Omega$ such that
\begin{align} \label{eqn:ellipregularity}
\norm{p}_2 \le C \left(\norm{f}+\norm{g}_{\frac{3}{2}, \Gamma_D}\right)
\end{align}
We refer to the reader to \cite{gilbarg2015elliptic} for sufficient conditions for $H^2$-regularity.

We present below some of the approximation properties of the finite element spaces
\begin{align*}
(w-\hat{w}, w_h)=0  \quad \forall w_h \in W_h
\end{align*}

Furthermore, the following important approximation properties hold true. For all $T \in \mathcal{T}_h$, $e \in \mathcal{T}_{h,i}|_{\partial \Omega_i}$, and smooth enough $\mathbf{v}$ and $w$
\begin{align}
\norm{\mathbf{v} - \Pi \mathbf{v}}_T &\le C h_T \norm{\mathbf{v}}_{1, T} \\
\norm{w - \hat{w}} &\le C_P h_T \norm{w}_{1, T}
\end{align}
where $C_P=1/\pi$, if $T$ is convex. 
Above inequalities are standard $L^2$-projection approximation results and can be found in \cite{ciarlet2002finite}. 
In the analysis below we will use of trace inequalities
\begin{align}
\forall T \in \mathcal{T}_h, \; e \in \partial T, \; \norm{\phi}_{e} &\le C \left(h^{-1/2}_T\norm{\phi} + h^{1/2}_T\norm{\nabla \phi} \right) \; \phi \in H^1(T) \\
\forall T \in \mathcal{T}_h, \; e \in \partial T, \; \norm{\phi}_{1/2, e} &\le C \norm{\phi}_{1,T} \qquad \phi \in H^1(T) \\
\forall T \in \mathcal{T}_h, \; e \in \partial T, \; \norm{\mathbf{v}\cdot \nu}_{e} &\le C h^{-1/2}_T\norm{\mathbf{v}}_{T} \quad \mathbf{v} \in \mathbf{V}^*_h
\end{align}
and Young's inequality
\begin{align*}
2ab \le \varepsilon a^2 +\frac{1}{\varepsilon}b^2 
\end{align*}
We know that from original work \cite{wheeler2002enhanced} the projection operator $\Pi^*$ was introduced and was utilized for a \textit{priori} error analysis. For convenience of the reader, we repeat the relevant and brief definition. Thus, we denote by $\Pi^*$ the projection operator that maps $(H^1(\Omega))^d$ onto $\mathbf{V}^*_h$ that defined locally for any element $T \in \mathcal{T}_h$ and any $\mathbf{q} \in (H^1(T))^d$ such that for all $\mathbf{q} \in (H^1(T))^d$ 
\begin{align}
\langle \Pi^* \mathbf{q} \cdot \nu, 1 \rangle_e = \langle \mathbf{q} \cdot \nu, 1 \rangle_e \label{eq:def_proj_star}
\end{align}
where $e$ is either any edge in 2D (or face in 3D) of $T$ not lying on $\Gamma$  or an edge in 2D (or face in 3D) of a sub-element, $T_k$. Such projection is developed prior to conducting error analysis for a \textit{priori} and a \textit{posteriori} error estimates.  As can be seen in Figure \ref{fig:EVMFEM_Gamma}, $T_k$ has a common edge with the interface grid $\mathcal{T}^{\Gamma}$. According to divergence theorem, we have
\begin{align}
\left(\nabla \cdot (\Pi^*\mathbf{q} -\mathbf{q}), w \right) = 0 \qquad \qquad \forall w \in W_h
\end{align} 
We refer the reader to the original work \cite{wheeler2002enhanced} for more details. We want to scale it to local element. So by scaling argument we reach the following lemma
\begin{lem}
	Let $\mathbf{v} \in \left(H^1(T)\right)^d$ then $\exists$ C independent of $h$ such that
	\begin{align}
	\norm{\Pi^* \mathbf{v} -\mathbf{v}}_{T} \le C h_T \norm{\mathbf{v}}_{1,T} 
	\end{align}
\end{lem}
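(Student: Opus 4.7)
The plan is to reduce this to the classical $L^2$-projection/interpolation error estimate for the lowest-order Raviart--Thomas space $RT_0$ by exploiting the local structure of $\Pi^*$. Recall that away from $\Gamma$ the space $\mathbf{V}^*_h$ coincides with $\mathbf{V}_h$, so on any $T\in\mathcal{T}_h$ with $T\cap\Gamma=\emptyset$ the projection $\Pi^*$ agrees with the standard $RT_0$ projection $\Pi_{RT_0}$, and the estimate
\[
\norm{\Pi_{RT_0}\mathbf{v}-\mathbf{v}}_T \le C h_T \norm{\mathbf{v}}_{1,T}
\]
is the classical result, proved by a Bramble--Hilbert/scaling argument on a reference element $\hat{T}$ (see, e.g., the cited monograph of Ciarlet). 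So in this case we are done.

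For $T\in\mathcal{T}_{h,i}$ with $T\cap\Gamma\neq\emptyset$, the interface grid $\mathcal{T}^\Gamma_h$ induces the partition of $T$ into sub-elements $T_k$ depicted in Figure~\ref{fig:EVMFEM_Gamma}, and by construction $\Pi^*\mathbf{v}$ restricted to each $T_k$ lies in $\mathbf{V}_h(T_k)$ with normal-flux averages matching those of $\mathbf{v}$ on every edge (face, in 3D) of $T_k$, precisely by the defining property \eqref{eq:def_proj_star}. Hence on each sub-element $\Pi^*\mathbf{v}|_{T_k}$ coincides with the standard $RT_0$ projection of $\mathbf{v}|_{T_k}$, and the classical estimate yields
\[
\norm{\Pi^*\mathbf{v}-\mathbf{v}}_{T_k} \le C\, h_{T_k}\, \norm{\mathbf{v}}_{1,T_k}.
\]
Squaring and summing over $k$ and using $h_{T_k}\le h_T$ gives
\[
\norm{\Pi^*\mathbf{v}-\mathbf{v}}_T^2 \;=\; \sum_k \norm{\Pi^*\mathbf{v}-\mathbf{v}}_{T_k}^2 \;\le\; C^2 h_T^2 \sum_k \norm{\mathbf{v}}_{1,T_k}^2 \;=\; C^2 h_T^2 \norm{\mathbf{v}}_{1,T}^2,
\]
which is the desired bound.

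The only technical worry is whether the constant $C$ obtained from the reference-element argument is truly independent of $h$ when applied on the sub-elements $T_k$. This boils down to shape-regularity of $\{T_k\}$ uniformly in $h$: the sub-elements arise from cutting an axis-aligned rectangle (or brick) by the trace of a neighbouring axis-aligned partition, so the aspect ratios of $T_k$ are controlled by those of $\mathcal{T}_{h,i}$ and $\mathcal{T}_{h,j}$, which are shape-regular by assumption. This is the main obstacle one must be careful about, but under the paper's standing assumptions it poses no difficulty and the constant remains uniform, completing the proof.
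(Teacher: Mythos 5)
Your handling of elements away from $\Gamma$ is fine, but the interface case contains a genuine gap: the claim that $\Pi^*\mathbf{v}|_{T_k}$ coincides with the standard $RT_0$ projection of $\mathbf{v}|_{T_k}$ on each sub-element is false. The defining property \eqref{eq:def_proj_star} matches mean normal fluxes only on the full edges of $T$ not lying on $\Gamma$ and on the sub-edges $e_k\subset\Gamma$; it imposes nothing on the internal edges created when the interface partition is extended into $T$ (e.g.\ the edge shared by $T_1$ and $T_2$ in Figure \ref{fig:EVMFEM_Gamma}), nor on the sub-portions of the outer edges of $T$ that border an individual $T_k$. A dimension count shows it could not be otherwise: $\mathbf{V}^*_h(T)$ restricted to an interface element carries one degree of freedom per non-interface edge of $T$ plus one per sub-edge on $\Gamma$ (five in the configuration of Figure \ref{fig:EVMFEM_Gamma}), while the sub-elements $\{T_k\}$ have strictly more distinct edges (seven there), so the mean fluxes cannot all be matched. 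Concretely, on the right edge of $T$ the normal component of $\Pi^*\mathbf{v}$ is a single constant equal to the mean of $\mathbf{v}\cdot\nu$ over the whole edge, which in general does not reproduce the integral of $\mathbf{v}\cdot\nu$ over the portion of that edge belonging to $T_1$ alone. Hence the sub-element-by-sub-element reduction to the classical $RT_0$ estimate does not get off the ground.

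The paper does not argue this way: it asserts the bound via a scaling argument on the whole element $T$, deferring details to \cite{wheeler2002enhanced}; the standard route there is to split $\Pi^*\mathbf{v}-\mathbf{v}=(\Pi^*\mathbf{v}-\Pi\mathbf{v})+(\Pi\mathbf{v}-\mathbf{v})$ with $\Pi$ the usual $RT_0$ projection on $T$, and to bound the first term through the finitely many edge-flux functionals that determine it on the reference configuration. Any repair of your argument needs an estimate of that type rather than the sub-element identification. Your closing remark on shape regularity also needs care: the aspect ratio of a sub-element $T_k$ is governed by the ratio of the mesh sizes of the two neighbouring subdomain grids (it can be of order $h_i/h_j$), not by the shape-regularity constants of each grid separately, so uniformity of the constant on the sub-elements would not be automatic even if both grids consisted of perfect squares.
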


\begin{lem}
	Let $\mathbf{v} \in \left(H^1(T)\right)^d$ then $\exists$ C independent of $h$ such that
	\begin{align}
	\norm{(\Pi^* \mathbf{v} -\mathbf{v}) \cdot \nu}_{\partial T} \le C h^{1/2}_T \norm{\mathbf{v}}_{1,T}
	\end{align}
\end{lem}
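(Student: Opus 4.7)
My plan is to reduce the boundary estimate to the previous lemma via the elementwise trace inequality (12). If $T$ does not touch $\Gamma$, then $\Pi^*\mathbf{v}$ coincides with the usual Raviart--Thomas interpolant and is in $(H^1(T))^d$, so I can apply (12) directly to each Cartesian component of $\Pi^*\mathbf{v}-\mathbf{v}$ on $T$. If $T$ is adjacent to $\Gamma$, then $\Pi^*\mathbf{v}$ is only piecewise smooth on the sub-elements $T_k$ induced by the interface partition $\mathcal{T}^{\Gamma}_h$ (see Figure \ref{fig:EVMFEM_Gamma}), so I would instead work on each $T_k$:
\begin{align*}
\norm{(\Pi^*\mathbf{v}-\mathbf{v})\cdot\nu}_{\partial T_k}
  \le C\bigl(h_{T_k}^{-1/2}\norm{\Pi^*\mathbf{v}-\mathbf{v}}_{T_k}
          + h_{T_k}^{1/2}\norm{\nabla(\Pi^*\mathbf{v}-\mathbf{v})}_{T_k}\bigr),
\end{align*}
then observe that $\norm{(\Pi^*\mathbf{v}-\mathbf{v})\cdot\nu}_{\partial T}^2\le\sum_k\norm{(\Pi^*\mathbf{v}-\mathbf{v})\cdot\nu}_{\partial T_k}^2$ since each piece of $\partial T$ is covered exactly once by the outer faces of the $T_k$.

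The first summand on the right of the displayed bound is handled by the previous lemma, applied at the sub-element level (its proof goes through unchanged on $T_k$), producing a contribution of order $h_{T_k}^{1/2}\norm{\mathbf{v}}_{1,T_k}$. For the second summand I would invoke the $H^1$-stability of the Raviart--Thomas projector, $\norm{\nabla\Pi^*\mathbf{v}}_{T_k}\le C\norm{\mathbf{v}}_{1,T_k}$, which is obtained by the standard Piola pullback to the reference cube: on the reference element the projector acts between finite-dimensional spaces and is therefore automatically bounded in any norm, and the Piola transformation carries the correct power of $h$ back to $T_k$. Combined with the trivial bound $\norm{\nabla\mathbf{v}}_{T_k}\le\norm{\mathbf{v}}_{1,T_k}$, this yields another $h_{T_k}^{1/2}\norm{\mathbf{v}}_{1,T_k}$ contribution. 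Summing over $k$, invoking shape-regularity so that $h_{T_k}$ is comparable to $h_T$ and $\bigcup_k T_k=T$, delivers the claimed bound.

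The main obstacle is precisely this sub-element bookkeeping near $\Gamma$: because $\Pi^*\mathbf{v}$ is not in $(H^1(T))^d$ as a whole when $T$ abuts the interface, (12) cannot be applied on $T$ directly, and one must localize to each $T_k$ and carefully verify that the sum of boundary contributions controls the full $\norm{\,\cdot\,}_{\partial T}$ norm without spurious interior-edge terms. A secondary technical point is confirming the $H^1$-stability of $\Pi^*$ on the enhanced space; this is routine but worth writing out via a reference-element scaling. Once both pieces are in place, the final estimate follows by a direct combination of the trace inequality and the previous lemma, with no new analytic ingredients required.
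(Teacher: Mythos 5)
Your argument is correct, but it takes a genuinely different route from the paper. The paper's proof is a one-line chain: it applies the inverse-type trace inequality $\norm{\phi\cdot\nu}_{\partial T}\le C h_T^{-1/2}\norm{\phi}_T$ directly to $\phi=\Pi^*\mathbf{v}-\mathbf{v}$ and then invokes the previous lemma, $\norm{\Pi^*\mathbf{v}-\mathbf{v}}_T\le Ch_T\norm{\mathbf{v}}_{1,T}$, so that $h_T^{-1/2}\cdot h_T=h_T^{1/2}$. You instead use the continuous trace inequality (the version with the gradient term), localized to the sub-elements $T_k$, and therefore need the extra ingredient $\norm{\nabla(\Pi^*\mathbf{v}-\mathbf{v})}_{T_k}\le C\norm{\mathbf{v}}_{1,T_k}$, i.e.\ $H^1$-stability of the projector. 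What your approach buys is rigor on a point the paper glosses over: the inverse trace inequality as stated holds for discrete functions, and $\Pi^*\mathbf{v}-\mathbf{v}$ is not discrete, so the paper's first step really requires either your continuous-trace argument or a splitting into a discrete part and $\Pi\mathbf{v}-\mathbf{v}$. What it costs is the sub-element bookkeeping and the stability proof. One detail to fix: $H^1$-stability of $\Pi^*$ on the reference element is \emph{not} automatic from finite-dimensionality of the target space, since the domain $\left(H^1(\hat T)\right)^d$ is infinite-dimensional; you must bound the edge-moment degrees of freedom $\langle\mathbf{v}\cdot\nu,1\rangle_e$ by $\norm{\mathbf{v}}_{1,\hat T}$ via the trace theorem and then scale. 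That step is standard and your overall conclusion stands, but the justification as written is not quite right.
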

\begin{proof}
	\begin{align*}
	\norm{(\Pi^* \mathbf{v} -\mathbf{v}) \cdot \nu}_{\partial T} \le C h^{-1/2}_T \norm{\Pi^* \mathbf{v} -\mathbf{v}}_T \le C h^{-1/2}_T h_T \norm{\mathbf{v}}_{1,T} =C h^{1/2}_T \norm{\mathbf{v}}_{1,T} 
	\end{align*}
\end{proof}

\newpage
\subsection{Explicit Residual-based Error Estimators}

In this section, we derive upper bounds on the local error. It is also called explicit estimators as they involve the input data and computed numerical solution without solving extra sub-problems. We are not interested
in computing the constants in the error estimates and take the boundary condition as $g=0$.

\subsubsection{Estimates for Pressure}
\begin{thm}
	There exists a constant $C$ independent of $h$ such that
	\begin{equation}
	\norm{e_p}^2 \le C \{\sum_{T\in \mathcal{T}_h} (\tilde{\zeta}_P +\tilde{\zeta}_{R}) +\tilde{\zeta}_{EV}\}
	\end{equation}
	where, for all $T \in \mathcal{T}_h $
	\begin{align*}
	\tilde{\zeta}_P &= \norm{\mathbf{K}^{-1}\mathbf{u}_h + \nabla p_h}^2_T  h^2_T \\
	\tilde{\zeta}_{R, h} &= \norm{f - \nabla \cdot \mathbf{u}_h}^2_T h^2_T \\
	\tilde{\zeta}_{EV} &= \sum_{e \in \mathcal{T}^{\Gamma}_{h}} \norm{ \llbracket p_h \rrbracket}^2_{e} h_{T}
	\end{align*}
\end{thm}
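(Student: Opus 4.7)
My plan is an Aubin--Nitsche duality argument that extracts the three residual contributions from a test function built on the dual solution together with the projection $\Pi^*$ defined in \eqref{eq:def_proj_star}. First, introduce the auxiliary problem: find $\phi \in H^1_0(\Omega)$ satisfying $-\nabla\cdot(\mathbf{K}\nabla\phi) = e_p$. The $H^2$-regularity assumption \eqref{eqn:ellipregularity} gives $\norm{\phi}_2 \le C \norm{e_p}$, and setting $\bm{\psi} = -\mathbf{K}\nabla\phi \in (H^1(\Omega))^d$ yields $\nabla\cdot\bm{\psi} = e_p$, $\mathbf{K}^{-1}\bm{\psi} = -\nabla\phi$, and $\norm{\bm{\psi}}_1 \le C \norm{e_p}$. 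Writing $\norm{e_p}^2 = (e_p, \nabla\cdot\bm{\psi})$ and splitting $\bm{\psi} = \Pi^*\bm{\psi} + (\bm{\psi} - \Pi^*\bm{\psi})$, the divergence-preservation property of $\Pi^*$ reduces $(e_p, \nabla\cdot\Pi^*\bm{\psi})$ to $\norm{P_h e_p}^2$, where $P_h$ is the $L^2$-projection onto $W_h$.

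For the remaining piece $(e_p, \nabla\cdot(\bm{\psi} - \Pi^*\bm{\psi}))$ I integrate by parts element by element. Since $p_h$ is piecewise constant, $\nabla(p - p_h) = -\mathbf{K}^{-1}\mathbf{u} - \nabla p_h = -\mathbf{K}^{-1} e_{\mathbf{u}} - (\mathbf{K}^{-1}\mathbf{u}_h + \nabla p_h)$, so the Darcy-residual factor $\mathbf{K}^{-1}\mathbf{u}_h + \nabla p_h$ underlying $\tilde{\zeta}_P$ appears naturally on each element after Cauchy--Schwarz with $\norm{\bm{\psi} - \Pi^*\bm{\psi}}_T \le C h_T \norm{\bm{\psi}}_{1,T}$. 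The velocity-error contribution $(\mathbf{K}^{-1} e_{\mathbf{u}}, \bm{\psi})$ is handled in two moves: (i) testing the residual equation \eqref{eq:residualform} with $(\Pi^*\bm{\psi}, 0) \in \mathbf{V}^*_h \times W_h$ gives $(\mathbf{K}^{-1} e_{\mathbf{u}}, \Pi^*\bm{\psi}) = \norm{P_h e_p}^2$; (ii) since $e_{\mathbf{u}} \in H(\mathrm{div}; \Omega)$ and $\phi|_{\partial\Omega} = 0$, a global integration by parts yields $(\mathbf{K}^{-1} e_{\mathbf{u}}, \bm{\psi}) = (f - \nabla\cdot\mathbf{u}_h, \phi)$. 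The discrete mass balance \eqref{eq:EV_eq2} makes $f - \nabla\cdot\mathbf{u}_h$ orthogonal to $W_h$, so I rewrite this inner product as $(f - \nabla\cdot\mathbf{u}_h, \phi - P_h\phi)$ and use $\norm{\phi - P_h\phi}_T \le C h_T \norm{\phi}_{1,T}$ to obtain the $\tilde{\zeta}_R$ estimator.

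The element-boundary terms from the element-wise integration by parts demand the most delicate treatment. On matching interior edges the defining property $\int_e (\bm{\psi} - \Pi^*\bm{\psi}) \cdot \nu = 0$ absorbs the jumps of $p_h$; on the nonmatching interface $\Gamma$, however, the jump $\llbracket p_h \rrbracket_e$ on each sub-edge $e \in \mathcal{T}_h^\Gamma$ survives and, when bounded by Cauchy--Schwarz combined with the trace inequality for $\bm{\psi}$, produces exactly $\tilde{\zeta}_{EV}$ with its $h_T$-weighted jump norm. Assembling the three contributions via the approximation bounds together with $\norm{\bm{\psi}}_1, \norm{\phi}_2 \le C \norm{e_p}$, each appears as $C \bigl(\sum_T \tilde{\zeta}_P\bigr)^{1/2} \norm{e_p}$, $C \bigl(\sum_T \tilde{\zeta}_R\bigr)^{1/2} \norm{e_p}$, or $C \, \tilde{\zeta}_{EV}^{1/2} \norm{e_p}$; dividing by $\norm{e_p}$ and squaring delivers the stated bound. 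The main obstacle I anticipate is the bookkeeping of boundary contributions across the nonmatching interface --- identifying which pieces are annihilated by the edge-preservation property of $\Pi^*$ on the sub-elements of $\mathcal{T}_h^\Gamma$ and which legitimately contribute to $\tilde{\zeta}_{EV}$, while correctly tracking the $h_T^{1/2}$ factors from the scaled trace inequalities so that the final estimator carries the correct $h_T$-weighting.
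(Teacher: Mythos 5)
Your proposal is correct and follows essentially the same route as the paper: an Aubin--Nitsche duality argument with the dual flux $-\mathbf{K}\nabla\varphi$, the projection $\Pi^*$ and the $L^2$-projection onto $W_h$, elementwise integration by parts, and the approximation/trace estimates for $\Pi^*$ to extract $\tilde{\zeta}_P$, $\tilde{\zeta}_R$, and the interface jump term $\tilde{\zeta}_{EV}$. The only difference is bookkeeping --- the paper inserts $(\mathbf{v}-\Pi^*\mathbf{v},\varphi-\hat{\varphi})$ into the symmetric bilinear form $\mathcal{A}_s$ via Galerkin orthogonality rather than splitting $(e_p,\nabla\cdot\bm{\psi})$ directly --- which leads to the same three bounds.
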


\begin{proof}
	We consider a duality argument to derive bounds. Let $\varphi$ be the solution of the auxiliary problem
	\begin{align} 
	-\nabla \cdot \mathbf{K} \nabla \varphi &= e_p \qquad \text{in} \quad \Omega,   \\ 
	\varphi &=0 \qquad \qquad \text{on} \quad \partial \Omega.  
	\end{align}
	By the elliptic regularity assumption \ref{eqn:ellipregularity} implies that
	\begin{align*}
	\norm{\varphi}_2 \le C\norm{e_p}
	\end{align*}
	
	Let $\mathbf{v} = -\mathbf{K} \nabla \varphi$ then
	\begin{align*} 
	\mathcal{A}_s(\mathbf{v}, \varphi ; \mathbf{\tilde{v}}, \tilde{w}) = \left(\mathbf{K}^{-1}\textbf{v}, \mathbf{\tilde{v}} \right) - \left(\varphi, \nabla \cdot \mathbf{\tilde{v}} \right)- \left(\nabla \cdot \textbf{u}, \tilde{w}\right) =-(e_p, \tilde{w})
	\end{align*}
	
	Then
	\begin{align*} 
	\norm{e_p}^2 &= - \mathcal{A}_s(\mathbf{v}, \varphi; e_{\mathbf{u}}, e_p) = - \mathcal{A}_s(e_{\mathbf{u}}, e_p; \mathbf{v}, \varphi)= - \mathcal{A}_s(e_{\mathbf{u}}, e_p; \mathbf{v} - \Pi^*\mathbf{v} , \varphi -\hat{\varphi}) =  \\
	&=-\sum_{T\in \mathcal{T}_h} \{ \left(\mathbf{K}^{-1}e_{\mathbf{u}}, \mathbf{v} - \Pi^*\mathbf{v} \right)_{T} - \left(e_p, \nabla \cdot (\mathbf{v} - \Pi^*\mathbf{v} )\right)_{T} - \left(\nabla \cdot e_{\mathbf{u}}, \varphi -\hat{\varphi} \right)_{T} \}=\\
	&=-\sum_{T\in \mathcal{T}_h} \{ \left(\mathbf{K}^{-1}\mathbf{u}, \mathbf{v} - \Pi^*\mathbf{v} \right)_{T} - \left(\mathbf{K}^{-1}\mathbf{u}_h, \mathbf{v} - \Pi^*\mathbf{v} \right)_{T}  - \left(p, \nabla \cdot (\mathbf{v} - \Pi^*\mathbf{v} ) \right)_{T} \\
	&+ \left(p_h, \nabla \cdot \mathbf{v} - \Pi^*\mathbf{v} \right)_{T} -\left(\nabla \cdot \mathbf{u}, \varphi -\hat{\varphi} \right)_{T} + \left(\nabla \cdot \mathbf{u}_h, \varphi -\hat{\varphi} \right)_{T} \}= \\
	&= \sum_{T\in \mathcal{T}_h} \{  \left(\mathbf{K}^{-1}\mathbf{u}_h, \mathbf{v} - \Pi^*\mathbf{v} \right)_{T}  - \left(p_h, \nabla \cdot (\mathbf{v} - \Pi^*\mathbf{v} ) \right)_{T} + \left(f, \varphi -\hat{\varphi} \right)_{T} \\
	&- \left(\nabla \cdot \mathbf{u}_h, \varphi -\hat{\varphi} \right)_{T}   \}
	\end{align*}
	
	By Green's formula, $$\left(p_h, \nabla \cdot (\mathbf{v} - \Pi^*\mathbf{v} ) \right)_{\Omega_i}=-\left(\nabla p_h, \mathbf{v} - \Pi^*\mathbf{v}  \right)_{\Omega_i} + \langle p_h, \left( \mathbf{v} - \Pi^*\mathbf{v} \right) \cdot  \nu \rangle_{\partial \Omega_i}$$
	we obtain
	\begin{align*} 
	\norm{e_p}^2 &= \sum_{T\in \mathcal{T}_h} \left\{  \left(\mathbf{K}^{-1}\mathbf{u}_h + \nabla p_h, \mathbf{v} - \Pi^*\mathbf{v} \right)_{T} + \left(f - \nabla \cdot \mathbf{u}_h, \varphi -\hat{\varphi} \right)_{T} \right\}-  \sum_{i=1}^{n}\langle p_h, \left( \mathbf{v} - \Pi^*\mathbf{v} \right) \cdot  \nu_i \rangle_{\partial \Omega_i}
	\end{align*}
	
	We use the Cauchy-Schwarz inequality and approximation properties
	\begin{align*} 
	\norm{e_p}^2 &\le \sum_{T \in \mathcal{T}_h} \left(  \norm{\mathbf{K}^{-1}\mathbf{u}_h + \nabla p_h}_T \norm{\mathbf{v} - \Pi^*\mathbf{v}}_{T} + \norm{f - \nabla \cdot \mathbf{u}_h}_T \norm{\varphi -\hat{\varphi}}_{T} \right)\\
	& - \sum_{T \in \mathcal{T}_h} \langle p_h, \left( \mathbf{v} - \Pi^*\mathbf{v} \right) \cdot  \nu \rangle_{\partial T} \\ 
	&\le C \sum_{T \in \mathcal{T}_h} \left(  \norm{\mathbf{K}^{-1}\mathbf{u}_h + \nabla p_h}_T  h_T \norm{\mathbf{v}}_{1,T} + \norm{f - \nabla \cdot \mathbf{u}_h}_T h_T \norm{\varphi}_{1,T}\right) \\
	& + \frac{1}{2}\sum_{e \in \mathcal{T}^{\Gamma}_{h}, e \in \partial T_i \cup \partial T_j} \norm{\llbracket p_h \rrbracket}_{e} \norm{\left( \mathbf{v} - \Pi^*\mathbf{v} \right) \cdot  \nu }_{e} \\
	&\le C \sum_{T \in \mathcal{T}_h} \left(  \norm{\mathbf{K}^{-1}\mathbf{u}_h + \nabla p_h}_T  h_T \norm{\varphi}_{2,T} + \norm{f - \nabla \cdot \mathbf{u}_h}_T h_T \norm{\varphi}_{1,T} \right) \\
	&+ C\sum_{e \in \mathcal{T}^{\Gamma}_{h}, e \in \partial T_i \cup \partial T_j} \norm{\llbracket p_h \rrbracket}_{e}h^{\frac{1}{2}}_{T}\norm{\mathbf{v} }_{1, T} \\
	&\le C \sum_{T \in \mathcal{T}_h} \left(  \norm{\mathbf{K}^{-1}\mathbf{u}_h + \nabla p_h}_T  h_T \norm{e_p}_{T} + \norm{f - \nabla \cdot \mathbf{u}_h}_T h_T \norm{e_p}_{T} \right) \\
	&+ C \sum_{e \in \mathcal{T}^{\Gamma}_{h}, e \in \partial T_i \cup \partial T_j} \norm{\llbracket p_h \rrbracket}_{e} h^{\frac{1}{2}}_{T}\norm{\mathbf{v} }_{1,T} \\
	&\le C \sum_{T \in \mathcal{T}_h} \left(  \norm{\mathbf{K}^{-1}\mathbf{u}_h + \nabla p_h}_T  h_T \norm{e_p}_{T} + \norm{f - \nabla \cdot \mathbf{u}_h}_T h_T \norm{e_p}_{T} \right) \\ 
	&+ C \sum_{e \in \mathcal{T}^{\Gamma}_{h}, e \in \partial T_i \cup \partial T_j} \norm{\llbracket p_h \rrbracket}_{e} h^{\frac{1}{2}}_{T}\norm{e_p}_{T}
	\end{align*}
\end{proof}

\subsubsection{Estimates for Velocity with Saturation Assumption}
In order to get bounds on velocity error ($e_{\mathbf{u}}$), we employ a saturation assumption. Let be $\mathbf{V}^*_{h_f}, W_{h_f}$ be the finite element approximation spaces which corresponds to refinement of $\mathcal{T}_h$, where $h_f = h/m $ for $m \ge 2$. This implies that $\mathbf{V}^*_{h} \subset \mathbf{V}^*_{h_f} $ and $W_{h} \subset W_{h_f}$. A \textit{priori} error estimates from \cite{wheeler2002enhanced} allows us to employ the following saturation assumption.
\subsection*{Saturation assumption}
There exist constant $\beta < 1$,$\alpha < 1$ and $h_f = h/m $ for $m \ge 2$, $m \in \mathbb{N}$   such that

\begin{align} 
\norm{\textbf{u}-\mathbf{u}_{h_f}} &\le \beta \norm{\textbf{u}-\mathbf{u}_{h}} \label{eqn:saturationassumptionu} \\
\norm{p-p_{h_f}} &\le \alpha \norm{p-p_h} \label{eqn:saturationassumptionp}
\end{align}

\begin{lem}[]\label{lem:saturation}
	Let $\mathbf{u} \in \mathbf{H}\left({\rm div}, \Omega \right)$ be the exact flux defined by Eqns $1-3$ with $g=0$. Let $\mathbf{u}_h , \mathbf{u}_{h_f} \in \mathbf{L}^2\left(\Omega\right)$ be arbitrary and $h_f = h/m $ for $m \ge 2$, $m \in \mathbb{N}$. If there exist $\beta \in (0, 1)$ such that
	\begin{equation}
	\norm{\mathbf{u}-\mathbf{u}_{h_f}} \le \beta \norm{\mathbf{u}-\mathbf{u}_h},
	\end{equation}
	then, \vspace{-1em}
	\begin{equation}
	\frac{1}{1+\beta}\norm{\mathbf{u}_h-\mathbf{u}_{h_f}} \le \norm{\mathbf{u}-\mathbf{u}_h} \le \frac{1}{1-\beta}\norm{\mathbf{u}_h-\mathbf{u}_{h_f}}.
	\end{equation}	
\end{lem}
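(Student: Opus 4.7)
The plan is to prove both inequalities by applying the triangle inequality in $L^2$ to the three quantities $\mathbf{u}$, $\mathbf{u}_h$, and $\mathbf{u}_{h_f}$, and then using the saturation hypothesis to absorb the term involving $\|\mathbf{u}-\mathbf{u}_{h_f}\|$. No properties of the finite element spaces or of the operator are needed at all: the statement is a purely metric consequence of the assumption $\|\mathbf{u}-\mathbf{u}_{h_f}\| \le \beta\|\mathbf{u}-\mathbf{u}_h\|$ with $\beta<1$, so the proof should be one short paragraph.

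First, for the left-hand inequality I would insert $\mathbf{u}$ into $\|\mathbf{u}_h - \mathbf{u}_{h_f}\|$ and use the triangle inequality:
\begin{equation*}
\|\mathbf{u}_h - \mathbf{u}_{h_f}\| \le \|\mathbf{u}_h - \mathbf{u}\| + \|\mathbf{u} - \mathbf{u}_{h_f}\| \le \|\mathbf{u} - \mathbf{u}_h\| + \beta\|\mathbf{u} - \mathbf{u}_h\| = (1+\beta)\|\mathbf{u} - \mathbf{u}_h\|,
\end{equation*}
which rearranges directly to $\tfrac{1}{1+\beta}\|\mathbf{u}_h - \mathbf{u}_{h_f}\| \le \|\mathbf{u} - \mathbf{u}_h\|$.

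Next, for the right-hand inequality I would again insert $\mathbf{u}_{h_f}$ into $\|\mathbf{u} - \mathbf{u}_h\|$ and apply the triangle inequality together with the saturation assumption:
\begin{equation*}
\|\mathbf{u} - \mathbf{u}_h\| \le \|\mathbf{u} - \mathbf{u}_{h_f}\| + \|\mathbf{u}_{h_f} - \mathbf{u}_h\| \le \beta\|\mathbf{u} - \mathbf{u}_h\| + \|\mathbf{u}_h - \mathbf{u}_{h_f}\|.
\end{equation*}
Moving the $\beta\|\mathbf{u}-\mathbf{u}_h\|$ term to the left yields $(1-\beta)\|\mathbf{u}-\mathbf{u}_h\| \le \|\mathbf{u}_h-\mathbf{u}_{h_f}\|$, and dividing by $1-\beta > 0$ (which is where the hypothesis $\beta < 1$ is essential) gives the desired upper bound.

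There is really no obstacle here; the only subtlety is that the assumption $\beta < 1$ is used crucially in the final division, and it is what makes the computable quantity $\|\mathbf{u}_h - \mathbf{u}_{h_f}\|$ an equivalent surrogate for the unknown error $\|\mathbf{u}-\mathbf{u}_h\|$. The same argument would apply verbatim to produce the analogous bound for the pressure using \eqref{eqn:saturationassumptionp}, which is presumably why the lemma is stated only for the velocity.
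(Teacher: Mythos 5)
Your proof is correct and matches the paper exactly: the paper simply remarks that the lemma follows from the triangle inequality, and your two applications of it (together with the division by $1-\beta>0$) are precisely that argument written out.
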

The proof of lemma \ref{lem:saturation}  is straightforward by using the triangle inequality.


We write $\mathbf{u}_{h_f} \in \mathbf{V}^*_{h_f}$ and $p_{h_f} \in W_{h_f}$ that are the enhanced velocity mixed finite element solution of equations (\ref{eq:EV_eq1}) - (\ref{eq:EV_eq2}) and let
\begin{align}
\tilde{e}_{\mathbf{u}} = \mathbf{u}_{h_f}-\mathbf{u}_h , \qquad \tilde{e}_{p} = p_{h_f}-p_h  
\end{align}
We have that $(\tilde{e}_{\mathbf{u}}, \tilde{e}_p) \in \mathbf{V}^*_{h, f} \times W_{h, f}$ which satisfy the residual equation
\begin{align}
\mathcal{A}(\tilde{e}_{\mathbf{u}}, \tilde{e}_p ; \mathbf{\tilde{v}}_h, \tilde{w}_h)=L(\mathbf{\tilde{v}}_h, \tilde{w}_h)-	\mathcal{A}(\mathbf{u}_h, p_h ; \mathbf{\tilde{v}}_h, \tilde{w}_h) \qquad \forall (\mathbf{\tilde{v}}_h, \tilde{w}_h) \in \mathbf{V}^*_{h_f} \times W_{h_f} 
\end{align}
and the orthogonality condition
\begin{align}
\mathcal{A}(\tilde{e}_{\mathbf{u}}, \tilde{e}_p ; \mathbf{v}_h, w_h)=0 \qquad \forall (\mathbf{v}_h, w_h) \in \mathbf{V}^*_h \times W_h 
\end{align}

\begin{thm}
	Assume that the saturations assumptions (\ref{eqn:saturationassumptionu}) and (\ref{eqn:saturationassumptionp}) hold. Then there exists a constant $C$ independent of $h$ such that
	\begin{align*}
	\norm{e_{\mathbf{u}}}^2 \le C \left(\sum_{T\in \mathcal{T}_h} \{\zeta_P + \zeta_{R} \} +\zeta_{EV} \right)
	\end{align*}
	where, for all $T \in \mathcal{T}_h $
	\begin{align*}
	\zeta_P &= \norm{\mathbf{K}^{-1}\mathbf{u}_h + \nabla p_h}^2_T  \\
	\zeta_{R, h} &= \norm{f - \nabla \cdot \mathbf{u}_h}^2_T h^2_T \\
	\zeta_{EV}&=\sum_{e \in \mathcal{T}^{\Gamma}_{h}} \norm{\llbracket p_h \rrbracket}^2_{e} h^{-1}_e
	\end{align*}
\end{thm}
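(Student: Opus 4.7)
The plan is to reduce the analysis to the discrete error $\tilde{e}_{\mathbf{u}}=\mathbf{u}_{h_f}-\mathbf{u}_h$ via the velocity saturation assumption, and then extract computable residual quantities from the coercive bilinear form $\mathcal{A}_c$ together with Galerkin orthogonality, mirroring (up to scaling) the argument used for the pressure theorem.

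\textbf{Saturation and coercivity.} By Lemma \ref{lem:saturation} applied to (\ref{eqn:saturationassumptionu}) we have $\|e_{\mathbf{u}}\|\le(1-\beta)^{-1}\|\tilde{e}_{\mathbf{u}}\|$, so only $\|\tilde{e}_{\mathbf{u}}\|^2$ remains to be bounded. The coercivity identity $\mathcal{A}_c(\mathbf{v},w;\mathbf{v},w)=(\mathbf{K}^{-1}\mathbf{v},\mathbf{v})$ yields $k_{\max}^{-1}\|\tilde{e}_{\mathbf{u}}\|^2\le(\mathbf{K}^{-1}\tilde{e}_{\mathbf{u}},\tilde{e}_{\mathbf{u}})=\mathcal{A}_c(\tilde{e}_{\mathbf{u}},\tilde{e}_p;\tilde{e}_{\mathbf{u}},\tilde{e}_p)$. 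Next I would apply the fine-scale residual equation with test pair $(\tilde{e}_{\mathbf{u}},\tilde{e}_p)$, subtract $(\Pi^*\tilde{e}_{\mathbf{u}},\hat{\tilde{e}}_p)\in\mathbf{V}^*_h\times W_h$ via the orthogonality property, and note that $\nabla p_h=0$ elementwise so that $\nabla p_h$ may be inserted freely. This produces the key identity
\begin{align*}
(\mathbf{K}^{-1}\tilde{e}_{\mathbf{u}},\tilde{e}_{\mathbf{u}}) &= (f-\nabla\cdot\mathbf{u}_h,\;\tilde{e}_p-\hat{\tilde{e}}_p)-(\mathbf{K}^{-1}\mathbf{u}_h+\nabla p_h,\;\tilde{e}_{\mathbf{u}}-\Pi^*\tilde{e}_{\mathbf{u}}) \\
&\quad +(p_h,\;\nabla\cdot(\tilde{e}_{\mathbf{u}}-\Pi^*\tilde{e}_{\mathbf{u}})).
\end{align*}

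\textbf{The Darcy-residual and jump terms.} The Darcy-residual term is handled by Cauchy-Schwarz combined with the local bound $\|\tilde{e}_{\mathbf{u}}-\Pi^*\tilde{e}_{\mathbf{u}}\|_T\le C\|\tilde{e}_{\mathbf{u}}\|_T$; since $\tilde{e}_{\mathbf{u}}$ is piecewise polynomial on the fine mesh with $h_f=h/m$, one combines the approximation lemma for $\Pi^*$ with an inverse inequality, absorbing the $h_f^{-1}$ into the constant $Cm$. This yields the $\zeta_P$ estimator. For the interface term, I would integrate by parts on each subdomain $\Omega_i$ exactly as in the pressure proof: because $\nabla p_h=0$ elementwise and $\int_e(\tilde{e}_{\mathbf{u}}-\Pi^*\tilde{e}_{\mathbf{u}})\cdot\nu=0$ on every coarse edge away from $\Gamma$ and on every fine sub-edge on $\Gamma$, the sum collapses to contributions on $\Gamma_{i,j}$ that combine between adjacent subdomains into the jump $\llbracket p_h\rrbracket$ on each $e\in\mathcal{T}^{\Gamma}_h$. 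Applying Cauchy-Schwarz edgewise (the same loose upper bound used in the pressure proof), followed by the second preliminary lemma bounding $\|(\tilde{e}_{\mathbf{u}}-\Pi^*\tilde{e}_{\mathbf{u}})\cdot\nu\|_{\partial T}$ and the fine-mesh inverse inequality, delivers the $h_e^{-1}$ scaling of $\zeta_{EV}$.

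\textbf{Main obstacle.} The delicate piece is the conservation-residual term $(f-\nabla\cdot\mathbf{u}_h,\tilde{e}_p-\hat{\tilde{e}}_p)$, because $\tilde{e}_p$ is merely piecewise constant on the fine mesh and so no standard $H^1$-type Poincar\'e bound can supply the $h_T$ factor on $\|\tilde{e}_p-\hat{\tilde{e}}_p\|_T$ directly. My plan is to exploit the discrete mass balances at both scales, $\nabla\cdot\mathbf{u}_h=\hat{f}_h$ and $\nabla\cdot\mathbf{u}_{h_f}=\hat{f}_{h_f}$, which give the Pythagorean bound $\|\nabla\cdot\tilde{e}_{\mathbf{u}}\|_T\le\|f-\nabla\cdot\mathbf{u}_h\|_T$, and to invoke the pressure saturation (\ref{eqn:saturationassumptionp}) together with the already-proven pressure estimate to control $\|\tilde{e}_p\|$ with the required $h$-weighting, so that Cauchy-Schwarz yields precisely the $h_T^2$-scaled $\zeta_{R,h}$ term. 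Collecting the three bounds and applying Young's inequality to absorb a small multiple of $\|\tilde{e}_{\mathbf{u}}\|^2$ from the right-hand side into the coercive left-hand side then completes the proof with $C$ independent of $h$.
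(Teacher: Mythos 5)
Your overall architecture coincides with the paper's: reduce to $\tilde{e}_{\mathbf{u}}=\mathbf{u}_{h_f}-\mathbf{u}_h$ via the velocity saturation lemma, use the coercivity identity $\mathcal{A}_c(\mathbf{v},w;\mathbf{v},w)=(\mathbf{K}^{-1}\mathbf{v},\mathbf{v})$ together with the fine-scale residual equation and orthogonality (subtracting $\Pi^*\tilde{e}_{\mathbf{u}}$), and split the right-hand side into a Darcy residual, a conservation residual, and an interface jump term. Your handling of the first and third pieces is essentially the paper's: $\norm{\tilde{e}_{\mathbf{u}}-\Pi^*\tilde{e}_{\mathbf{u}}}_T\le C\norm{\tilde{e}_{\mathbf{u}}}_T$ plus Young's inequality for $\zeta_P$, and the trace bound $\norm{\mathbf{v}\cdot\nu}_e\le Ch_T^{-1/2}\norm{\mathbf{v}}_T$ for the $h_e^{-1}$ scaling of $\zeta_{EV}$.

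The gap is exactly where you flag it, and the fix you sketch does not close it. Writing $\mathbb{T}_2=\sum_T(f-\nabla\cdot\mathbf{u}_h,\tilde{e}_p-P_h\tilde{e}_p)_T$ and then controlling $\norm{\tilde{e}_p}$ \emph{globally} through the $L^2$ pressure saturation and the already-proven pressure theorem gives $\norm{\tilde{e}_p}\le C\big(\sum_T h_T^2\zeta_P+\zeta_{R,h}+\cdots\big)^{1/2}$; pairing this with the un-$h$-weighted factor $\norm{f-\nabla\cdot\mathbf{u}_h}=\big(\sum_T h_T^{-2}\zeta_{R,h}\big)^{1/2}$ and applying Young's inequality leaves a contribution proportional to $\norm{f-\nabla\cdot\mathbf{u}_h}^2$ \emph{without} the $h_T^2$ weight, i.e.\ $h_T^{-2}\zeta_{R,h}$, which is not dominated by $C\,\zeta_{R,h}$ with $C$ independent of $h$. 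The divergence identity $\norm{\nabla\cdot\tilde{e}_{\mathbf{u}}}_T\le\norm{f-\nabla\cdot\mathbf{u}_h}_T$ is correct but supplies no information about $\norm{\tilde{e}_p-P_h\tilde{e}_p}_T$, so it cannot produce the missing local factor $h_T$. The paper's device is different and entirely local: it bounds $\norm{\tilde{e}_p-P_h\tilde{e}_p}_T\le Ch_T\norm{\nabla\tilde{e}_p}_T$, passes to $\norm{\nabla e_p}_T$ by saturation, and then substitutes the Darcy law $\nabla p=-\mathbf{K}^{-1}\mathbf{u}$ to write $\nabla e_p=-\mathbf{K}^{-1}(\mathbf{u}-\mathbf{u}_h)-(\mathbf{K}^{-1}\mathbf{u}_h+\nabla p_h)$, so that Young's inequality yields $h_T^2\norm{f-\nabla\cdot\mathbf{u}_h}_T^2$ plus a small multiple of $\norm{\mathbf{K}^{-1}e_{\mathbf{u}}}^2$ (absorbed into the left-hand side through the saturation constant) plus the unweighted $\zeta_P$, which is harmless since $\zeta_P$ carries no $h$-weight in this theorem. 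That substitution of the Darcy law into $\nabla e_p$ is the idea your sketch is missing; without it the $\zeta_{R,h}$ term cannot be obtained with its stated $h_T^2$ scaling.
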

\begin{proof}
	It is enough to bound $\tilde{e}_{\mathbf{u_h}}$, since the saturation assumption gives the following bound
	\begin{align}
	\norm{e_{\mathbf{u}}} \le \frac{1}{1-\beta} \norm{\tilde{e}_{\mathbf{u_h}}}
	\end{align}
	
	\begin{align*}
	\norm{\mathbf{K}^{-\frac{1}{2}}\tilde{e}_{\mathbf{u_h}}}^2 &= \mathcal{A}(\tilde{e}_{\mathbf{u}}, \tilde{e}_p ; \tilde{e}_{\mathbf{u}}, \tilde{e}_p) = \mathcal{A}(\tilde{e}_{\mathbf{u}}, \tilde{e}_p ; \tilde{e}_{\mathbf{u}}-\Pi^*\tilde{e}_{\mathbf{u}}, \tilde{e}_p) \\
	&=L(\tilde{e}_{\mathbf{u}}-\Pi^*\tilde{e}_{\mathbf{u}}, \tilde{e}_p)-\mathcal{A}(\mathbf{u}_h, p_h ; \tilde{e}_{\mathbf{u}}-\Pi^*\tilde{e}_{\mathbf{u}}, \tilde{e}_p) \\
	&= -\sum_{T\in \mathcal{T}_h} \left \{ \left(\mathbf{K}^{-1}\mathbf{u}_h, \tilde{e}_{\mathbf{u}}-\Pi^*\tilde{e}_{\mathbf{u}} \right)_{T} - \left(p_h, \nabla \cdot (\tilde{e}_{\mathbf{u}}-\Pi^*\tilde{e}_{\mathbf{u}} )\right)_{T} + \left(\nabla \cdot \mathbf{u}_h, \tilde{e}_p \right)_{T} \right \} \\ &- (f, \tilde{e}_p )  
	\end{align*}
	Using Green's formula 
	\begin{align*}
	\norm{\mathbf{K}^{-\frac{1}{2}}\tilde{e}_{\mathbf{u_h}}}^2 =&-\sum_{T\in \mathcal{T}_h} \left \{ \underbrace{ \left(\mathbf{K}^{-1}\mathbf{u}_h+\nabla p_h, \tilde{e}_{\mathbf{u}}-\Pi^*\tilde{e}_{\mathbf{u}} \right)_{T}}_{\mathbb{T}_1} + \underbrace{ \left(\nabla \cdot \mathbf{u}_h-f, \tilde{e}_p \right)_{T}}_{\mathbb{T}_2} \right \}  \\
	&-\underbrace{ \sum_{i=1}^n  \langle p_h, (\tilde{e}_{\mathbf{u}}-\Pi^*\tilde{e}_{\mathbf{u}} ) \cdot \nu_i\rangle_{\Gamma_i}}_{\mathbb{T}_3} 
	\end{align*}
	We treat three terms in the equation separately.
	\begin{align}
	\mathbb{T}_1 \le | \left(\mathbf{K}^{-1}\mathbf{u}_h+\nabla p_h, \tilde{e}_{\mathbf{u}}-\Pi^*\tilde{e}_{\mathbf{u}} \right)_{T}| \le C\left( \frac{1}{4\varepsilon_1}\norm{\mathbf{K}^{-1}\mathbf{u}_h+\nabla p_h}^2_T + \varepsilon_1 \norm{\tilde{e}_{\mathbf{u}}}^2_T \right)
	\end{align}
	Similarly for second term, we obtain
	\begin{align*}
	\mathbb{T}_2 &\le | \left(\nabla \cdot \mathbf{u}_h-f, \tilde{e}_p-P_h\tilde{e}_p \right)_{T}| \le \norm{\nabla \cdot \mathbf{u}_h-f}_T C h_T \norm{\nabla \tilde{e}_p}_T \\
	& \le C \norm{\nabla \cdot \mathbf{u}_h-f}_T h_T (1+\alpha)\norm{\nabla e_p}_T \\
	& \le C \norm{\nabla \cdot \mathbf{u}_h-f}_T h_T \norm{-\mathbf{K}^{-1}\mathbf{u}+\mathbf{K}^{-1}\mathbf{u}_h-\mathbf{K}^{-1}\mathbf{u}_h-\nabla p_h}_T \\
	& \le C^2 \norm{\nabla \cdot \mathbf{u}_h-f}^2_T h^2_T+\frac{1}{4}\norm{\mathbf{K}^{-1}\mathbf{u}-\mathbf{K}^{-1}\mathbf{u}_h}^2+\frac{1}{4}\norm{\mathbf{K}^{-1}\mathbf{u}_h+\nabla p_h}^2_T 
	\end{align*}
	
	\begin{align*}
	\mathbb{T}_3 &\le \sum_{e \in \mathcal{T}^{\Gamma}_{h}} |\langle \llbracket p_h \rrbracket, (\tilde{e}_{\mathbf{u}}-\Pi^*\tilde{e}_{\mathbf{u}} ) \cdot \nu_i\rangle_{e}| \le C \sum_{e \in \mathcal{T}^{\Gamma}_{h}} \norm{[p_h]}_e \norm{\tilde{e}_{\mathbf{u}} \cdot \nu_i}_e \\
	&\le  C \sum_{e \in \mathcal{T}^{\Gamma}_{h}, T \cap e \ne \emptyset} \norm{\llbracket p_h \rrbracket}_e h^{-1/2} \norm{\tilde{e}_{\mathbf{u}}}_T  \\
	&\le \frac{C^2}{4\varepsilon_3} \sum_{e \in \mathcal{T}^{\Gamma}_{h}}  \norm{\llbracket p_h \rrbracket}^2_e h^{-1}+ \varepsilon_3 \sum_{T \in \mathcal{T}^{\Gamma}_h(\Omega^*)} \norm{\tilde{e}_{\mathbf{u}}}^2_T
	\end{align*}
	Combining all three terms for small enough $\varepsilon_1$ and $\varepsilon_3$ yields
	\begin{align*}
	\norm{\tilde{e}_{\mathbf{u_h}}}^2 \le C \left \{   \sum_{T\in \mathcal{T}_h} \norm{\mathbf{K}^{-1}\mathbf{u}_h+\nabla p_h}^2_T + \norm{\nabla \cdot \mathbf{u}_h-f}^2_T h^2_T\right\} +C \sum_{e \in \mathcal{T}^{\Gamma}_{h}} \norm{\llbracket p_h \rrbracket}^2_e h^{-1}_e 
	\end{align*} 
\end{proof}
\subsection{Lower Bound}

\begin{thm}
	Assume $f$ is polynomial with degree $m$. There exists a constant $C$ independent of $h$ such that
	\begin{align}
	\tilde{\zeta}_{P} + \zeta_R \le C\left( \norm{p-p_h} + \norm{\mathbf{u}-\mathbf{u}_h} h_T \right) \label{ineq:lowerbound}\\
	\zeta_{EV} \le C\left( \norm{p-p_h} + \norm{\mathbf{u}-\mathbf{u}_h} h_T \right)
	\end{align}
\end{thm}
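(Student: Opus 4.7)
The plan is to prove each inequality by the standard Verfürth-style bubble function technique, adapted locally to the EVMFEM setting. Throughout I exploit two structural facts from $RT_0$: (i) $\nabla\cdot\mathbf{u}_h$ is piecewise constant, so $r_T:=f-\nabla\cdot\mathbf{u}_h$ is a polynomial of degree $m$ on each $T$; (ii) $p_h$ is piecewise constant, so $\nabla p_h\equiv 0$ on each element interior, which reduces $\boldsymbol{\psi}_T:=\mathbf{K}^{-1}\mathbf{u}_h+\nabla p_h$ to $\mathbf{K}^{-1}\mathbf{u}_h$ on $T^\circ$, a polynomial after freezing $\mathbf{K}^{-1}$ at its cell average. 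Let $b_T$ denote the element bubble and $b_e$ the edge/face bubble on the interface partition $\mathcal{T}_h^\Gamma$.

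For $\zeta_R$, test the residual against $\phi:=r_T b_T\in H^1_0(T)$. Equivalence of norms on the finite-dimensional polynomial space gives $\|r_T\|_T^2\lesssim(r_T,\phi)_T$. Using $f=\nabla\cdot\mathbf{u}$ and the vanishing of $\phi$ on $\partial T$, integration by parts yields $(r_T,\phi)_T=-(\mathbf{u}-\mathbf{u}_h,\nabla\phi)_T$, and the inverse inequality $\|\nabla\phi\|_T\lesssim h_T^{-1}\|r_T\|_T$ concludes $h_T\|r_T\|_T\lesssim\|\mathbf{u}-\mathbf{u}_h\|_T$. For $\tilde{\zeta}_P$ take the vector bubble $\boldsymbol{\chi}:=\boldsymbol{\psi}_T b_T$, so $\boldsymbol{\chi}\cdot\nu|_{\partial T}=0$. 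The bubble norm-equivalence gives $\|\boldsymbol{\psi}_T\|_T^2\lesssim(\boldsymbol{\psi}_T,\boldsymbol{\chi})_T$. Writing $\boldsymbol{\psi}_T=\mathbf{K}^{-1}(\mathbf{u}_h-\mathbf{u})-\nabla p$, integrating by parts, and using that $p_h$ is constant in $T$ (so $(p_h,\nabla\cdot\boldsymbol{\chi})_T=0$), I obtain
\begin{equation*}
(\boldsymbol{\psi}_T,\boldsymbol{\chi})_T=(\mathbf{K}^{-1}(\mathbf{u}_h-\mathbf{u}),\boldsymbol{\chi})_T+(p-p_h,\nabla\cdot\boldsymbol{\chi})_T.
\end{equation*}
Cauchy--Schwarz plus the inverse estimate $\|\nabla\cdot\boldsymbol{\chi}\|_T\lesssim h_T^{-1}\|\boldsymbol{\psi}_T\|_T$ then produces $h_T\|\boldsymbol{\psi}_T\|_T\lesssim h_T\|\mathbf{u}-\mathbf{u}_h\|_T+\|p-p_h\|_T$, which is the $\tilde{\zeta}_P$ bound.

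For $\zeta_{EV}$, fix $e\in\mathcal{T}_h^\Gamma$ lying between two sub-elements $T_1\subset\Omega_i$ and $T_2\subset\Omega_j$ of the interface fine partition, and let $\omega_e=T_1\cup T_2$. Build a vector field $\boldsymbol{\chi}_e$ supported in $\omega_e$ such that $\boldsymbol{\chi}_e\cdot\nu=\llbracket p_h\rrbracket b_e$ on $e$ and vanishes on $\partial\omega_e\setminus e$. The standard face-bubble scalings give $\|\boldsymbol{\chi}_e\|_{\omega_e}\lesssim h_e^{1/2}\|\llbracket p_h\rrbracket\|_e$ and $\|\nabla\cdot\boldsymbol{\chi}_e\|_{\omega_e}\lesssim h_e^{-1/2}\|\llbracket p_h\rrbracket\|_e$. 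Multiplying the identity $\mathbf{K}^{-1}\mathbf{u}+\nabla p=0$ by $\boldsymbol{\chi}_e$, integrating by parts on each $T_k$, and subtracting the analogous expression for $(\mathbf{u}_h,p_h)$ produces
\begin{equation*}
\langle\llbracket p_h\rrbracket,\llbracket p_h\rrbracket b_e\rangle_e=(\mathbf{K}^{-1}(\mathbf{u}-\mathbf{u}_h),\boldsymbol{\chi}_e)_{\omega_e}-(p-p_h,\nabla\cdot\boldsymbol{\chi}_e)_{\omega_e}+\sum_k(\mathbf{K}^{-1}\mathbf{u}_h+\nabla p_h,\boldsymbol{\chi}_e)_{T_k},
\end{equation*}
and the bubble norm-equivalence $\|\llbracket p_h\rrbracket\|_e^2\lesssim\langle\llbracket p_h\rrbracket,\llbracket p_h\rrbracket b_e\rangle_e$ together with the already-proven $\tilde{\zeta}_P$ bound on each $T_k$ absorbs the last sum, leaving $h_e^{-1/2}\|\llbracket p_h\rrbracket\|_e\lesssim\|\mathbf{u}-\mathbf{u}_h\|_{\omega_e}+h_e^{-1}\|p-p_h\|_{\omega_e}$.

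The delicate step is the jump estimate. Because $\mathcal{T}_h^\Gamma$ is a non-matching intersection of traces, the patch $\omega_e$ straddles two subdomains with possibly very different mesh sizes $h_i,h_j$, and the sub-element $T_k$ is a non-standard (possibly high-aspect-ratio) piece rather than a full mesh cell. I would need to verify that the bubble equivalences, inverse inequalities, and extension maps hold with constants depending only on shape-regularity and not on $h_i/h_j$, which is not covered by the textbook Verfürth proof for conforming $RT_0$ and will require a careful scaling argument on the interface fine mesh.
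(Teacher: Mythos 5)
Your treatment of $\zeta_R$ and $\tilde{\zeta}_P$ is correct and is essentially the same element-bubble argument the paper relies on; the paper merely cites \cite{wheeler2005posteriori, carstensen1997posteriori} for these two bounds, so your version is in fact the more explicit one, and it lands on the same local estimates $h_T\norm{f-\nabla\cdot\mathbf{u}_h}_T \le C\norm{\mathbf{u}-\mathbf{u}_h}_T$ and $h_T\norm{\mathbf{K}^{-1}\mathbf{u}_h+\nabla p_h}_T \le C\left(\norm{p-p_h}_T + h_T\norm{\mathbf{u}-\mathbf{u}_h}_T\right)$. Where you genuinely diverge is $\zeta_{EV}$. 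You construct a face-bubble lifting $\boldsymbol{\chi}_e$ with prescribed normal trace on the interface sub-edge and supported in the patch of interface sub-elements, and you correctly flag the resulting obstruction: on the non-matching interface those sub-elements can have aspect ratios governed by $h_i/h_j$, so the bubble norm-equivalences and inverse estimates are not covered by the textbook scaling argument, and your proof of the third bound is not complete until that is supplied. The paper sidesteps this entirely with a much more elementary argument: since the exact pressure is continuous across $e$, $\norm{\llbracket p_h\rrbracket}_e \le \norm{p-p_h^{+}}_e + \norm{p-p_h^{-}}_e$; each term is controlled by the scaled trace inequality on the \emph{full} adjacent element $T$ with $e\subset\partial T$ (whose constant depends only on the shape regularity of $T$, not on the length of the sub-edge), and then $\nabla(p-p_h)$ is rewritten via Darcy's law as $-\mathbf{K}^{-1}(\mathbf{u}-\mathbf{u}_h)-(\mathbf{K}^{-1}\mathbf{u}_h+\nabla p_h)$, whose second piece is absorbed by the already-established $\tilde{\zeta}_P$ bound. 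Both routes yield the same powers of $h$, so the paper's approach buys robustness with respect to the mesh-size ratio across $\Gamma$ at no cost, whereas yours requires exactly the additional interface scaling analysis you identify as missing; if you want to keep the face-bubble route, the fix is to perform the lifting elementwise on each full element adjacent to $e$ (treating $e$ as a portion of its boundary) rather than on the non-matching patch $\omega_e$.
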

\begin{proof}
	We have made use of a bubble function argument as it has been shown in \cite{wheeler2005posteriori, carstensen1997posteriori}
	\begin{align*}
	\tilde{\zeta}_P = \norm{\mathbf{K}^{-1}\mathbf{u}_h + \nabla p_h}_T h_T \le C\Big\{ \norm{e_p}_T+ \norm{e_{\mathbf{u}}}_Th_T\Big\}
	\end{align*}
	By applying the element bubble function technique we obtain 
	\begin{align*}
	\norm{f - \nabla \cdot \mathbf{u}_h}_T h_T \le C h_T\norm{\nabla \cdot \left( \mathbf{u}- \mathbf{u}_h \right)}_T \le C \norm{\mathbf{u} - \mathbf{u}_h}_T =\norm{e_{\mathbf{u}}}_T
	\end{align*}
	From this we conclude Inequality (\ref{ineq:lowerbound}).
	Note that $\norm{\llbracket p_h \rrbracket}_{e} \le \norm{p-p^{+}_h }_{e}+\norm{ p-p^{-}_h}_{e}$. According to the trace inequality we get 
	\begin{align*}
	\norm{ p - p_h }_{e}  &\le C \left(h_T^{-1/2}\norm{p-p_h}+h_T^{1/2}\norm{\nabla (p-p_h)}\right) \\
	&\le C \left(h_T^{-1/2}\norm{p-p_h}+h_T^{1/2}\norm{\mathbf{K}^{-1}\mathbf{u}_h + \nabla p_h}+h_T^{1/2}\norm{\mathbf{K}^{-1}(\mathbf{u}-\mathbf{u}_h)}\right) \\
	&\le C \left(h_T^{-1/2}\norm{p-p_h}+h_T^{1/2}\norm{\mathbf{K}^{-1}\mathbf{u}_h + \nabla p_h}+h_T^{1/2}\norm{\mathbf{K}^{-1}(\mathbf{u}-\mathbf{u}_h)}\right) \\
	& \le C \left(h_T^{-1/2}\norm{e_p}+h_T^{1/2}\norm{e_{\mathbf{u}}}\right)
	\end{align*}
	using above inequality for $\tilde{\zeta}_P$.
\end{proof}

\newpage
\subsection{Residual-based and Smoothing Estimators with Postprocessing}
In this section, we show a general a \textit{posteriori} error estimate in $L^2$ norm using suitable a polynomial functions $s$, which is obtained by the Arbogast and Chen postprocessing of $p_h$ \cite{arbogast1995implementation}. This is discussed in Section \ref{section:postprocessing}. The advantage of using the postprocessed values ($s$) lies in the fact that it leads to better indicator for pressure and flux error in the element.  

\subsubsection{Estimates for Pressure}
\begin{thm}
	Let $\tilde{p}_h \in H^1(\mathcal{T}_{h, i})$ and $s \in H^1(\Omega_i)$. There exists a constant $C$ independent of $h$ such that
	\begin{equation}
	\norm{e_p}^2 \le C \sum_{T\in \mathcal{T}_h} \Big\{\tilde{\eta}_P +\tilde{\eta}_{R} +\tilde{\eta}_{NC} \Big \} 
	\end{equation}
	where, for all $T \in \mathcal{T}_h $
	\begin{align*}
	\tilde{\eta}_P &= \norm{\mathbf{K}^{-1}\mathbf{u}_h + \nabla s}^2_T  h^2_T \\
	\tilde{\eta}_{R, h} &= \norm{f - \nabla \cdot \mathbf{u}_h}^2_T h^2_T \\
	\tilde{\eta}_{NC} &= \norm{\nabla(s-\tilde{p}_h)}^2_{T} h^2_T  \\
	\end{align*}
\end{thm}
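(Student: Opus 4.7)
The plan is to mimic the duality argument from the first pressure-estimate theorem, replacing the interface jump contribution by terms involving the postprocessed pressures $s$ and $\tilde{p}_h$. First I would introduce the same auxiliary problem: let $\varphi$ solve $-\nabla\cdot(\mathbf{K}\nabla\varphi)=e_p$ in $\Omega$ with $\varphi=0$ on $\partial\Omega$, and set $\mathbf{v}=-\mathbf{K}\nabla\varphi$, so that elliptic regularity gives $\|\varphi\|_2\le C\|e_p\|$. By the same computation as before, $\|e_p\|^2=\mathcal{A}_s(e_{\mathbf{u}},e_p;\mathbf{v},\varphi)$, and Galerkin orthogonality against $(\Pi^*\mathbf{v},\hat{\varphi})\in\mathbf{V}_h^*\times W_h$ lets me rewrite
\[
\|e_p\|^2 \;=\; \sum_{T\in\mathcal{T}_h}\Big\{-(\mathbf{K}^{-1}\mathbf{u}_h,\mathbf{v}-\Pi^*\mathbf{v})_T+(p_h,\nabla\cdot(\mathbf{v}-\Pi^*\mathbf{v}))_T+(f-\nabla\cdot\mathbf{u}_h,\varphi-\hat{\varphi})_T\Big\}.
\]

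Next I would introduce the postprocessed pressure inside the second term. Writing $p_h=\tilde{p}_h+(p_h-\tilde{p}_h)$ and using the mean-preserving property of the Arbogast--Chen postprocessing (so that $p_h-\tilde{p}_h$ has zero mean on each $T$) together with the fact that $\nabla\cdot(\mathbf{v}-\Pi^*\mathbf{v})$ is $L^2$-orthogonal to $W_h$ on $T$, I expect the discrepancy between $p_h$ and $\tilde{p}_h$ either to vanish or to be absorbed into the nonconformity term. Then I apply Green's formula elementwise to $(\tilde{p}_h,\nabla\cdot(\mathbf{v}-\Pi^*\mathbf{v}))_T$, which is legitimate since $\tilde{p}_h\in H^1(T)$, and split the resulting $\nabla\tilde{p}_h$ as $\nabla s+\nabla(\tilde{p}_h-s)$. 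Combining $\nabla s$ with the $\mathbf{K}^{-1}\mathbf{u}_h$ contribution produces the volume residual $(\mathbf{K}^{-1}\mathbf{u}_h+\nabla s,\mathbf{v}-\Pi^*\mathbf{v})_T$, which via Cauchy--Schwarz and the $\Pi^*$-approximation lemma gives $\tilde{\eta}_P$; the residual piece $(f-\nabla\cdot\mathbf{u}_h,\varphi-\hat{\varphi})_T$ gives $\tilde{\eta}_R$ after the $L^2$-projection estimate on $\varphi$; and $(\nabla(\tilde{p}_h-s),\mathbf{v}-\Pi^*\mathbf{v})_T$ gives $\tilde{\eta}_{NC}$.

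The main obstacle I foresee is disposing of the boundary terms $\langle\tilde{p}_h,(\mathbf{v}-\Pi^*\mathbf{v})\cdot\nu\rangle_{\partial T}$ produced by the elementwise Green's formula. Summed over $T$, these collapse to edge jumps of $\tilde{p}_h$ against the continuous normal trace of $\mathbf{v}-\Pi^*\mathbf{v}$. The trick will be to insert $\pm s$ once more: since $s\in H^1(\Omega_i)$, it is single-valued on interior edges of $\Omega_i$ and the $\langle s,(\mathbf{v}-\Pi^*\mathbf{v})\cdot\nu\rangle$ contributions telescope; the zero-mean defining property \eqref{eq:def_proj_star} of $\Pi^*$ removes the remaining mean parts on each edge, leaving only a jump of $\tilde{p}_h-s$. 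After a trace inequality this too is controlled by $h_T\|\nabla(\tilde{p}_h-s)\|_T\|\mathbf{v}\|_{1,T}$ and is folded back into $\tilde{\eta}_{NC}$. On the outer boundary $\partial\Omega$ the factor $\varphi-\hat{\varphi}$ does not help directly, so I would rely on the Dirichlet data $g=0$ together with the edge-mean annihilation of $\Pi^*$ to close that contribution.

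Finally, I would apply Young's inequality with a small parameter to absorb the factors $\|\mathbf{v}\|_{1,T}\le C\|\varphi\|_{2,T}\le C\|e_p\|$ and $\|\varphi\|_{1,T}\le C\|e_p\|$ produced by the regularity estimate, divide through by $\|e_p\|$, square, and collect the three element contributions $\tilde{\eta}_P$, $\tilde{\eta}_R$, $\tilde{\eta}_{NC}$ to conclude. The delicate part of the whole plan is the boundary-term bookkeeping: checking that the interface jump term $\tilde{\zeta}_{EV}$ of the explicit estimator is genuinely replaced by $\tilde{\eta}_{NC}$ rather than simply hidden, and that the postprocessing's compatibility with the $RT_0$ space is strong enough to make the edge-mean cancellations fire.
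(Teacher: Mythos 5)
Your proposal follows essentially the same route as the paper: the identical duality argument with $\mathbf{v}=-\mathbf{K}\nabla\varphi$, Galerkin orthogonality against $(\Pi^*\mathbf{v},\hat{\varphi})$, substitution of $\tilde{p}_h$ for $p_h$ via the mean-preserving property, elementwise Green's formula, the split $\nabla\tilde{p}_h=\nabla s+\nabla(\tilde{p}_h-s)$ yielding $\tilde{\eta}_P$, $\tilde{\eta}_R$, $\tilde{\eta}_{NC}$, and a final Young/regularity step. If anything your bookkeeping of the $\langle\tilde{p}_h,(\mathbf{v}-\Pi^*\mathbf{v})\cdot\nu\rangle_{\partial T}$ terms is more explicit than the paper's, which simply drops the resulting jump term by appealing (in a remark) to the weak interelement continuity built into the Arbogast--Chen postprocessing.
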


\begin{proof}
	We consider a duality argument to derive bounds. Let $\varphi$ be the solution of the auxiliary problem
	\begin{align} 
	-\nabla \cdot K \nabla \varphi &= e_p \qquad \text{in} \quad \Omega,   \\ 
	\varphi &=0 \qquad \qquad \text{on} \quad \partial \Omega.  
	\end{align}
	By the elliptic regularity assumption,
	\begin{align}
	\norm{\varphi}_2 \le C\norm{e_p} \label{eqn:ellipreg}
	\end{align}
	
	Let $\mathbf{v} = -\mathbf{K} \nabla \varphi$ then
	\begin{align*} 
	\mathcal{A}(\mathbf{v}, \varphi ; \mathbf{\tilde{v}}, \tilde{w}) = -\sum^2_{i=1} \{ \left(\mathbf{K}^{-1}\textbf{v}, \mathbf{\tilde{v}} \right)_{\Omega_i} - \left(\varphi, \nabla \cdot \mathbf{\tilde{v}} \right)_{\Omega_i} + \left(\nabla \cdot \textbf{u}, \tilde{w}\right)_{\Omega_i}   \}=-(e_p, \tilde{w})
	\end{align*}
	Then
	\begin{align*} 
	\norm{e_p}^2 &= - \mathcal{A}(\mathbf{v}, \varphi; e_{\mathbf{u}}, e_p) = - \mathcal{A}(e_{\mathbf{u}}, e_p; \mathbf{v}, \varphi)= - \mathcal{A}(e_{\mathbf{u}}, e_p; \mathbf{v} - \Pi^*\mathbf{v} , \varphi -\hat{\varphi}) =  \\
	&=-\sum_{T\in \mathcal{T}_h} \{ \left(\mathbf{K}^{-1}e_{\mathbf{u}}, \mathbf{v} - \Pi^*\mathbf{v} \right)_{T} - \left(e_p, \nabla \cdot (\mathbf{v} - \Pi^*\mathbf{v} )\right)_{T} + \left(\nabla \cdot e_{\mathbf{u}}, \varphi -\hat{\varphi} \right)_{T} \}=\\
	&=-\sum_{T\in \mathcal{T}_h} \{ \left(\mathbf{K}^{-1}\mathbf{u}, \mathbf{v} - \Pi^*\mathbf{v} \right)_{T} - \left(\mathbf{K}^{-1}\mathbf{u}_h, \mathbf{v} - \Pi^*\mathbf{v} \right)_{T}  - \left(p, \nabla \cdot (\mathbf{v} - \Pi^*\mathbf{v} ) \right)_{T} \\
	&+ \left(p_h, \nabla \cdot \mathbf{v} - \Pi^*\mathbf{v} \right)_{T} + \left(\nabla \cdot \mathbf{u}, \varphi -\hat{\varphi} \right)_{T} - \left(\nabla \cdot \mathbf{u}_h, \varphi -\hat{\varphi} \right)_{T} \}= \\
	&= \sum_{T\in \mathcal{T}_h} \{ - \left(\mathbf{K}^{-1}\mathbf{u}_h, \mathbf{v} - \Pi^*\mathbf{v} \right)_{T}  + \left(p_h, \nabla \cdot (\mathbf{v} - \Pi^*\mathbf{v} ) \right)_{T} + \left(f-\nabla \cdot \mathbf{u}_h, \varphi -\hat{\varphi} \right)_{T} \} \\
	&= \sum_{T\in \mathcal{T}_h} \{  -\left(\mathbf{K}^{-1}\mathbf{u}_h, \mathbf{v} - \Pi^*\mathbf{v} \right)_{T}  + \left(\tilde{p}_h, \nabla \cdot (\mathbf{v} - \Pi^*\mathbf{v} ) \right)_{T} + + \left(f-\nabla \cdot \mathbf{u}_h, \varphi -\hat{\varphi} \right)_{T} \} 
	\end{align*}
	
	By Green's formula, $$\left(\tilde{p}_h, \nabla \cdot (\mathbf{v} - \Pi^*\mathbf{v} ) \right)_{\Omega_i}=-\left(\nabla \tilde{p}_h, \mathbf{v} - \Pi^*\mathbf{v}  \right)_{\Omega_i} + \langle \tilde{p}_h, \left( \mathbf{v} - \Pi^*\mathbf{v} \right) \cdot  \nu \rangle_{\partial \Omega_i}$$
	we obtain after using $s \in H^1(\Omega_i)$
	\begin{align*} 
	\norm{e_p}^2 &= \sum_{T\in \mathcal{T}_h} \Big\{  \left(\mathbf{K}^{-1}\mathbf{u}_h + \nabla s, \mathbf{v} - \Pi^*\mathbf{v} \right)_{T} + \left(f - \nabla \cdot \mathbf{u}_h, \varphi -\hat{\varphi} \right)_{T} \\
	&+\left(\nabla(s- \tilde{p}_h), \mathbf{v} - \Pi^*\mathbf{v}  \right)_{T} 
	- \langle \tilde{p}_h, \left( \mathbf{v} - \Pi^*\mathbf{v} \right) \cdot  \nu \rangle_{\partial T}\Big\}
	\end{align*}
	
	We use the Cauchy-Schwarz inequality and approximation properties
	\begin{align*} 
	\norm{e_p}^2 &\le \sum_{T \in \mathcal{T}_h}  \{ \norm{\mathbf{K}^{-1}\mathbf{u}_h + \nabla s}_T \norm{\mathbf{v} - \Pi^*\mathbf{v}}_{T} + \norm{f - \nabla \cdot \mathbf{u}_h}_T \norm{\varphi -\hat{\varphi}}_{T} \\ 
	&+\norm{\nabla (s-\tilde{p}_h)}_T \norm{\mathbf{v} - \Pi^*\mathbf{v}}_{T}  -\sum_{i=1}^{n} \langle \tilde{p}_h, \left( \mathbf{v} - \Pi^*\mathbf{v} \right) \cdot  \nu_i \rangle_{\Gamma_i}  \le \\
	&\le C \sum_{T \in \mathcal{T}_h} \left(  \norm{\mathbf{K}^{-1}\mathbf{u}_h + \nabla s}_T  h_T \norm{\mathbf{v}}_{1,T} + \norm{f - \nabla \cdot \mathbf{u}_h}_T h_T \norm{\varphi}_{1,T}\right)  \\
	&+\norm{\nabla (s-\tilde{p}_h)}_T\norm{\mathbf{v} }_{1,T}  + \sum_{e \in \mathcal{T}_{h}^{\Gamma}} \langle \llbracket \tilde{p}_h \rrbracket, \left( \mathbf{v} - \Pi^*\mathbf{v} \right) \cdot  \nu_i \rangle_{\Gamma_i} \\
	&\le C \sum_{T \in \mathcal{T}_h}  \norm{\mathbf{K}^{-1}\mathbf{u}_h + \nabla s}_T  h_T \norm{\varphi}_{2,T}
	+ \norm{f - \nabla \cdot \mathbf{u}_h}_T h_T \norm{\varphi}_{1,T} \\
	&+ \norm{\nabla (s-\tilde{p}_h)}_T h_T \norm{\varphi}_{2,T} \le C \sum_{T \in \mathcal{T}_h} \norm{\mathbf{K}^{-1}\mathbf{u}_h + \nabla s}_T  h_T \norm{e_p}_{T} \\
	&+ \norm{f - \nabla \cdot \mathbf{u}_h}_T h_T \norm{e_p}_{T} 
	+\norm{\nabla(s-\tilde{p}_h)}_T h_T \norm{e_p}_T  \\
	&\le C \sum_{T \in \mathcal{T}_h} \left(  \norm{\mathbf{K}^{-1}\mathbf{u}_h + \nabla s}_T  h_T \norm{e_p}_{T} 
	+\norm{f - \nabla \cdot \mathbf{u}_h}_T h_T \norm{e_p}_{T} \right)\\
	&+ C \sum_{T \in \mathcal{T}_h} \norm{\nabla(s-\tilde{p}_h)}_T h_T \norm{e_p}_T 
	\end{align*}
	We apply the Young inequality and (\ref{eqn:ellipreg}) inequality complete the proof.
\end{proof}
\begin{remark} 
	We note that in case of assumption on $\tilde{p}_h$ weakly continuous between elements and across interface, the jump terms would vanish.
\end{remark}
\begin{remark}
	We note reconstruction of velocity can be useful in defining a \textit{posteriori} error. There is recently proposed the improvement of velocity at interface in \cite{amanbek2019recovery} which is a good candidate for evaluation of velocity error.
\end{remark}
\subsubsection{Estimates for Velocity without Saturation Assumption}

\begin{thm}
	Let $s \in H^1(\Omega_i)$. There exists a constant $C$ independent of $h$ such that
	\begin{align*}
	\norm{e_{\mathbf{u}}}^2 \le C \sum_{T\in \mathcal{T}_h} \Big\{ \eta_P + \eta_{R} \Big\}
	\end{align*}
	where, for all $T \in \mathcal{T}_h $
	\begin{align*}
	\eta_P &= \norm{\mathbf{K}^{-1}\mathbf{u}_h + \nabla s}^2_T  \\
	\eta_{R} &= \norm{f - \nabla \cdot \mathbf{u}_h}^2_T h^2_T 
	\end{align*}
\end{thm}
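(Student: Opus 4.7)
The plan is to start from the elliptic-form identity
\[
\norm{\mathbf{K}^{-1/2} e_{\mathbf{u}}}^2 = (\mathbf{K}^{-1}\mathbf{u},e_{\mathbf{u}}) - (\mathbf{K}^{-1}\mathbf{u}_h,e_{\mathbf{u}}) = -(\nabla p + \mathbf{K}^{-1}\mathbf{u}_h,\, e_{\mathbf{u}}),
\]
using $\mathbf{K}^{-1}\mathbf{u} = -\nabla p$ from (\ref{eq:a}), which up to the uniform ellipticity constants equals $\norm{e_{\mathbf{u}}}^2$. Inserting $\pm\nabla s$ splits the right-hand side as $-(\nabla(p-s),e_{\mathbf{u}}) - (\mathbf{K}^{-1}\mathbf{u}_h + \nabla s, e_{\mathbf{u}})$. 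The second piece is the \emph{$\eta_P$-term}: Cauchy--Schwarz followed by Young's inequality produces an absorbable $\varepsilon\norm{e_{\mathbf{u}}}^2$ together with $C\sum_T \eta_P$.

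The first piece is the heart of the argument. Since $e_{\mathbf{u}}\in H({\rm div};\Omega)$ and $s\in H^1(\Omega_i)$, I would integrate by parts element by element, obtaining
\[
-(\nabla(p-s),e_{\mathbf{u}})_T = (p-s,\nabla\cdot e_{\mathbf{u}})_T - \langle p-s,\; e_{\mathbf{u}}\cdot\nu\rangle_{\partial T},
\]
and use $\nabla\cdot e_{\mathbf{u}} = f - \nabla\cdot\mathbf{u}_h$ on the volume integral. The crucial mass-conservation observation is that $\int_T(f-\nabla\cdot\mathbf{u}_h)=0$ for every $T$, obtained by choosing $w=\chi_T\in W_h$ in (\ref{eq:EV_eq2}). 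This lets me replace $p-s$ by $p-s-\overline{p-s}$ and invoke the Poincar\'e-type estimate $\norm{p-s-\overline{p-s}}_T \le C h_T\norm{\nabla(p-s)}_T$. Then, because $\nabla(p-s) = -\mathbf{K}^{-1}e_{\mathbf{u}} - (\mathbf{K}^{-1}\mathbf{u}_h+\nabla s)$, I get
\[
(p-s, f-\nabla\cdot\mathbf{u}_h)_T \le C h_T\,\norm{f-\nabla\cdot\mathbf{u}_h}_T\,\bigl(\norm{e_{\mathbf{u}}}_T + \norm{\mathbf{K}^{-1}\mathbf{u}_h+\nabla s}_T\bigr),
\]
and Young's inequality extracts the $\eta_R$ contribution, an additional $\eta_P$ contribution, and another absorbable $\varepsilon\norm{e_{\mathbf{u}}}_T^2$.

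For the edge sum I would reassemble $\sum_T\langle p-s,\,e_{\mathbf{u}}\cdot\nu\rangle_{\partial T}$ as $\sum_e\langle[p-s],\,e_{\mathbf{u}}\cdot\nu\rangle_e$, using that $e_{\mathbf{u}}\cdot\nu$ is single-valued across every edge (both $\mathbf{u}$ and $\mathbf{u}_h$ lie in $H({\rm div};\Omega)$). Global continuity of $p$ together with $s\in H^1(\Omega_i)$ kills the jumps on every interior subdomain edge, leaving only edges in $\partial\Omega$ (where $g=0$) and in $\Gamma$. Invoking the natural continuity of the Arbogast--Chen postprocessed $s$ across $\Gamma$ and with the Dirichlet trace (exactly the role played by the analogous remark following the pressure estimate), these residual boundary terms drop out; absorbing the $\varepsilon\norm{e_{\mathbf{u}}}^2$ contributions into the left-hand side then delivers the theorem.

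The main obstacle I expect is precisely this boundary--interface accounting: a strict reading of $s\in H^1(\Omega_i)$ permits jumps of $s$ across $\Gamma$, in which case the clean bound stated is incomplete and the proof would have to retain an extra indicator of the form $\sum_{e\in\mathcal{T}_h^{\Gamma}} h_e^{-1}\norm{\llbracket s\rrbracket}_e^2$ parallel to $\zeta_{EV}$ in the saturation-based estimate. Everything else reduces to routine Poincar\'e--trace--Young bookkeeping once the splitting into the $\eta_P$ flux residual and the $\eta_R$ divergence residual is in place.
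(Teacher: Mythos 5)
Your decomposition is algebraically the same as the paper's: after integrating $-(\nabla(p-s),e_{\mathbf{u}})$ by parts you recover exactly the paper's three pieces, namely the flux residual $-(\mathbf{K}^{-1}\mathbf{u}_h+\nabla s, e_{\mathbf{u}})$ giving $\eta_P$, the volume term $(p-s,\,f-\nabla\cdot\mathbf{u}_h)$ treated via discrete mass conservation, the mean-zero Poincar\'e bound and the splitting $\nabla(p-s)=-\mathbf{K}^{-1}e_{\mathbf{u}}-(\mathbf{K}^{-1}\mathbf{u}_h+\nabla s)$ (this matches the paper's $\mathbb{T}_1$ step for step), and the elementwise boundary pairing $\langle s-p,\,e_{\mathbf{u}}\cdot\nu\rangle_{\partial T}$. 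Up to that point the proposal is sound.

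The gap is in the last piece, and you have correctly located it yourself: under the stated hypothesis $s\in H^1(\Omega_i)$ only, the interface pairings do not ``drop out,'' and appealing to an unstated global continuity of the Arbogast--Chen reconstruction (or, alternatively, retaining an extra indicator $\sum_e h_e^{-1}\norm{\llbracket s\rrbracket}_e^2$) does not prove the theorem as written. The paper closes this without any jump term by \emph{not} reassembling the boundary sum into edge jumps at all: for each element $T$ adjacent to $\Gamma$ it bounds $\langle s-p,\,e_{\mathbf{u}}\cdot\nu\rangle_{\partial T}$ separately through the $H^{1/2}$--$H^{-1/2}$ duality, using the trace estimates $\norm{s-p}_{1/2,\partial T}\le C\norm{\nabla(s-p)}_{T}$ and $\norm{e_{\mathbf{u}}\cdot\nu}_{-1/2,\partial T}^2\le C\bigl(\norm{e_{\mathbf{u}}}_T^2+h_T^2\norm{\nabla\cdot e_{\mathbf{u}}}_T^2\bigr)$, then converts $\norm{\nabla(s-p)}_T$ back into $\norm{\mathbf{K}^{-1}e_{\mathbf{u}}}_T+\norm{\mathbf{K}^{-1}\mathbf{u}_h+\nabla s}_T$ and $\nabla\cdot e_{\mathbf{u}}$ into $f-\nabla\cdot\mathbf{u}_h$. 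Everything on the interface is thereby absorbed into $\eta_P$, $\eta_R$ and an $\varepsilon\norm{e_{\mathbf{u}}}^2$ term, with no jump of $s$ ever appearing. This elementwise volume-duality step is the one idea your argument is missing; with it in place the rest of your Poincar\'e--trace--Young bookkeeping goes through as you describe.
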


\begin{proof}
	
	For velocity we can similarly consider the following set of equations
	
	\begin{align*} 
	\norm{\mathbf{K}^{-\frac{1}{2}}e_{\mathbf{u}}}^2  &= \left( \mathbf{K}^{-1}\left(\mathbf{u} - \mathbf{u}_h \right), \mathbf{u} - \mathbf{u}_h  \right) = \left( \mathbf{K}^{-1} \mathbf{u}, \mathbf{u} - \mathbf{u}_h  \right) - \left( \mathbf{K}^{-1} \mathbf{u}_h, \mathbf{u} - \mathbf{u}_h  \right) = \\
	&= \left( p, \nabla \cdot \left( \mathbf{u} - \mathbf{u}_h \right)  \right) - \left( \mathbf{K}^{-1} \mathbf{u}_h, \mathbf{u} - \mathbf{u}_h  \right) \\ &= \left( p - s, \nabla \cdot \left( \mathbf{u} - \mathbf{u}_h \right)  \right)+\left( s, \nabla \cdot \left( \mathbf{u} - \mathbf{u}_h \right)  \right) - \left( \mathbf{K}^{-1} \mathbf{u}_h, \mathbf{u} - \mathbf{u}_h  \right) \\
	&=\underbrace{ \left( p - s, f - \nabla \cdot \mathbf{u}_h  \right)}_{\mathbb{T}_1} + \underbrace{ \sum_{T \in \mathcal{T}_h} \{\left( s, \nabla \cdot \left( \mathbf{u} - \mathbf{u}_h \right)  \right) - \left( \mathbf{K}^{-1} \mathbf{u}_h, \mathbf{u} - \mathbf{u}_h  \right) \} }_{\mathbb{T}_2}
	\end{align*}
	
	\begin{align*}
	\mathbb{T}_1 &\le |\left( p - s, \nabla \cdot \left( \mathbf{u} - \mathbf{u}_h \right)  \right)| = |\left( p - s - P_h(p-s), f - \nabla \cdot \mathbf{u}_h\right)| \\
	&\le \sum_{T \in \mathcal{T}_h} \norm{f-\nabla \cdot \mathbf{u}_h} \norm{p-s-P_h(p-s)} \\
	&\le \sum_{T \in \mathcal{T}_h}  \norm{f-\nabla \cdot \mathbf{u}_h} C_{P,T} h_T \norm{\nabla (p-s)} \\
	&\le \sum_{T \in \mathcal{T}_h} \norm{f-\nabla \cdot \mathbf{u}_h} C_{P,T} h_T \norm{-\mathbf{K}^{-1} \mathbf{u} + \mathbf{K}^{-1} \mathbf{u}_h - \mathbf{K}^{-1} \mathbf{u}_h - \nabla s} \\
	&\le \frac{5}{2} \sum_{T \in \mathcal{T}_h} C^2_{P,T}  h^2_T \norm{f-\nabla \cdot \mathbf{u}_h}^2 + \frac{1}{6} \norm{\mathbf{K}^{-1} \left( \mathbf{u} - \mathbf{u}_h \right) }^2 + \frac{1}{4}\norm{ \mathbf{K}^{-1} \mathbf{u}_h + \nabla s}^2 
	\end{align*}
	$k_{min}$ can be included as $\varepsilon$ term in the Young's inequality which helps to cancel $\mathbf{K}$ terms values.
	
	
	\begin{align*}
	\mathbb{T}_2 &= \sum_{T \in \mathcal{T}_h} \{\left( s, \nabla \cdot \left( \mathbf{u} - \mathbf{u}_h \right)  \right) - \left( \mathbf{K}^{-1} \mathbf{u}_h, \mathbf{u} - \mathbf{u}_h  \right) \} \\
	&= \sum_{T \in \mathcal{T}_h} \{-\left( \nabla s, \mathbf{u} - \mathbf{u}_h \right)_{T} + \langle s, \left( \mathbf{u} - \mathbf{u}_h\right) \cdot \nu \rangle_{\partial T} - \left( \mathbf{K}^{-1} \mathbf{u}_h, \mathbf{u} - \mathbf{u}_h  \right)_{T} \}  \\
	&= - \sum_{T \in \mathcal{T}_h} \{ \left( \mathbf{K}^{-1} \mathbf{u}_h + \nabla s, \mathbf{u} - \mathbf{u}_h \right)_{T}  +  \langle s, \left( \mathbf{u} - \mathbf{u}_h\right) \cdot \nu \rangle_{\partial T} \} \\
	&= - \sum_{T \in \mathcal{T}_h}  \left( \mathbf{K}^{-1} \mathbf{u}_h + \nabla s, \mathbf{u} - \mathbf{u}_h \right)_{T}  + \sum_{T \in \mathcal{T}_h(\Omega^*)} \langle s, \left( \mathbf{u} - \mathbf{u}_h\right) \cdot \nu \rangle_{\partial T} 
	\end{align*}
	
	We consider a last term above expression and use the trace inequality. We note $s$ is smooth in $\Omega_i$, jumps only at the interface $\Gamma$. 	
	
	We recall that
	\begin{align*}
	\norm{\left( \mathbf{u} - \mathbf{u}_h\right) \cdot \nu }^2_{-1/2, \partial T} \le C \left( \norm{\mathbf{u} - \mathbf{u}_h}^2_{T}+h^2_T \norm{\nabla \cdot \left( \mathbf{u} - \mathbf{u}_h\right)}^2\right)
	\end{align*}
	\begin{align*}
	\norm{ s-p }^2_{1/2, \partial T} \le C_1 \left( \norm{\nabla (s-p)}^2_{T}+h^2_T \norm{\left(s-p \right)^2 }\right) \le C \norm{\nabla (s-p)}^2_{T}
	\end{align*}
	by the discrete Poincare inequality or Cauchy-Schwarz inequality.
	
	\begin{align*}
	&\sum_{T \in \mathcal{T}_h, \mathcal{T}_h \cap \Gamma \ne 0 } \langle s, \left( \mathbf{u} - \mathbf{u}_h\right) \cdot \nu \rangle_{\partial T} =      \sum_{T \in \mathcal{T}_h(\Omega^*)} \langle s-p, \left( \mathbf{u} - \mathbf{u}_h\right) \cdot \nu \rangle_{\partial T} \\
	&\le \sum_{T \in \mathcal{T}_h(\Omega^*)}  \norm{s-p}_{1/2, \partial T} \norm{\left( \mathbf{u} - \mathbf{u}_h\right) \cdot \nu }_{-1/2, \partial T} \\
	&\le \Big\{ \sum_{T \in \mathcal{T}_h(\Omega^*)}  \norm{s-p}^2_{1/2, \partial T}    \Big\}^{1/2} \Big\{\sum_{T \in \mathcal{T}_h} \norm{\left( \mathbf{u} - \mathbf{u}_h\right) \cdot \nu }^2_{-1/2, \partial T} \Big\}^{1/2} \\
	& \le C \Big\{ \sum_{T \in \mathcal{T}_h(\Omega^*)}  \norm{\nabla (s-p)}^2_{T}   \Big\}^{1/2} \Big\{\sum_{T \in \mathcal{T}_h(\Omega^*)} \norm{\mathbf{u} - \mathbf{u}_h}^2_{T}+h^2_T \norm{\nabla \cdot \left( \mathbf{u} - \mathbf{u}_h\right)}^2 \Big\}^{1/2}  \\
	& \le C \Big\{ \sum_{T \in \mathcal{T}_h(\Omega^*)}  \norm{\nabla (s-p)}^2_{T}   \Big\}^{1/2} \Big\{\sum_{T \in \mathcal{T}_h(\Omega^*)} \norm{\mathbf{u} - \mathbf{u}_h}^2_{T}+h^2_T \norm{ f - \nabla \cdot \mathbf{u}_h}^2_{T} \Big\}^{1/2} \\
	& \le C^2  \sum_{T \in \mathcal{T}_h(\Omega^*)}  \norm{\nabla (s-p)}^2_{T} +\frac{1}{4} \norm{\mathbf{u} - \mathbf{u}_h}_T^2+\frac{1}{4}h^2\norm{ f - \nabla \cdot \mathbf{u}_h}_T^2
	\end{align*}

	\begin{align*}
	C \sum_{T \in \mathcal{T}_h(\Omega^*)} \norm{\nabla (p-s)}^2_{T} &\le C  \sum_{T \in \mathcal{T}_h(\Omega^*)} \norm{-\mathbf{K}^{-1} \mathbf{u} + \mathbf{K}^{-1} \mathbf{u}_h - \mathbf{K}^{-1} \mathbf{u}_h - \nabla s}_T \\
	&\le C  \sum_{T \in \mathcal{T}_h(\Omega^*)} \norm{-\mathbf{K}^{-1} \mathbf{u} + \mathbf{K}^{-1} \mathbf{u}_h}_T \norm{\mathbf{K}^{-1} \mathbf{u}_h + \nabla s}_T \\
	&\le  \sum_{T \in \mathcal{T}_h(\Omega^*)} \frac{1}{4} \norm{\mathbf{K}^{-1} \left( \mathbf{u} - \mathbf{u}_h \right) }_T^2 + C^2\norm{ \mathbf{K}^{-1} \mathbf{u}_h + \nabla s}_T^2  
	\end{align*}
	Therefore, 
	\begin{align*}
	\mathbb{T}_2 \le C \sum_{T \in \mathcal{T}_h}  \norm{\left( \mathbf{u} - \mathbf{u}_h \right) }^2_T + \norm{ \mathbf{K}^{-1} \mathbf{u}_h + \nabla s}^2_T 
	\end{align*}
	
\end{proof}
\subsection{Practical construction of the post-processed pressure.} \label{section:postprocessing}
For given Enhanced Velocity finite element approximation, we briefly describe the construction procedure of $\tilde{p}_h$, $s_h$ and an illustration of implementation in the two dimensional case.
\subsubsection{Construction of $\tilde{p}_h$.}
Restricting Enhanced Velocity space, we can denote $\widehat{\mathbf{V}}_h$ be spaces omitting interface constraints $\mathbf{V}^{\Gamma}$, so $\widehat{\mathbf{V}}_{h,i} :=\bigoplus_{i=1}^n \mathbf{V}_{h,i}(T)$ and then $\widehat{\mathbf{V}}_h :=\bigoplus_{i=1}^n \widehat{\mathbf{V}}_{h,i}$.
Let $\mathbf{u}_h$, $p_h$ be the solution of equations (\ref{eq:EV_eq1}) - (\ref{eq:EV_eq2}). We first compute Lagrange multipliers for each element. We define $\lambda_{h, T} \in \Lambda_h$, which is piecewise constant polynomials at edge or face, 
\begin{equation}\label{eq:lagrangemultipliera_fluxreconstr}
\langle \lambda_{h, T}, \mathbf{v}_h \cdot \mathbf{n}_T  \rangle_e :=
\left(\mathbf{K}^{-1}\mathbf{u}_h,\rm  v_h\right)_{T}-\left(p_h, \nabla \cdot \mathbf{v}_h \right)_{T} \qquad \forall \mathbf{v}_h \in  \widehat{\mathbf{V}}_h\left( T \right)
\end{equation}
where the element $T \in \mathcal{T}_{h} $ and its side $e$. We note that basis functions are same for $\mathbf{V}^{\Gamma}$ and $\widehat{\mathbf{V}}_h$. We employ the $L^2$ projected velocity from the interface, which has a finer enhanced velocity approximation, to the edge or face of subdomain element and the formulation is provided in the next subsection. We denote polynomial space $\widetilde{W}_h$ in the following manner
\begin{align}
\widetilde{W}_h=\{\varphi_h : \langle \llbracket  \varphi_h \rrbracket, \psi_h \rangle_e=0 \qquad \forall e \in \mathcal{E}^{int}_h \cup \mathcal{E}^{ext}_h  , \forall \psi_h \in \mathbb{Q}_m(e) \}
\end{align}
where $\mathbb{Q}_m$ is standard notation of space that is defined in \cite{arbogast1995implementation, pencheva2013robust}.
We next set the post-processed $\tilde{p}_h$ which is proposed in \cite{arbogast1995implementation} and the construction is performed with the following properties, for each $T \in \mathcal{T}_{h} $

\begin{align} 
(\tilde{p}_h, w_h)_T&=(p_h, w_h)_T \qquad \forall w_h \in \widetilde{W}_h(T), \label{eqn:4_32aa} \\
\langle \tilde{p}_h, \mu_h\rangle_e&=\langle \lambda_h, \mu_h \rangle_e \qquad \forall \mu_h \in \Lambda_h(e), \forall e \in \partial T \label{eqn:4_33aa}.	
\end{align}

\subsubsection{Construction of $s_h$.}

We propose to construct the $s_h$ in each subdomain $\Omega_i$ that has the conforming mesh in order to be an efficient in computation. Construction of $s_h$  involves the averaging operator $\mathcal{I}_{\rm av} :\mathbb{Q}_k(\mathcal{T}_h) \rightarrow \mathbb{Q}_k(\mathcal{T}_h) \cap H_0^1(\Omega_i) $. For definition of $\mathbb{Q}_m$ we refer reader to \cite{arbogast1995implementation, pencheva2013robust}. The operator is called Oswald operator and appeared in \cite{pencheva2013robust, ainsworth2005robust, karakashian2003posteriori, vohralik2010unified} and the analysis can be found in \cite{burman2007continuous, karakashian2003posteriori}. It is interesting to note that the mapping of the gradient of pressure through Oswald operator also considered in \cite{zienkiewicz1987simple}.
For given $\varphi_h \in \mathbb{Q}_m(\mathcal{T}_{h})$, we regard  the values of $\mathcal{I}_{\rm av}(\varphi_h)$ as being defined at a Lagrange node $V \in \Omega$ by averaging $\varphi_h$ values associated this node,  
\begin{align} \label{eq:oswaldavg}
\mathcal{I}_{\rm av}(\varphi_h)(V)=\frac{1}{\vert \mathcal{T}_{h} \vert} \sum_{T \in \mathcal{T}_{h}} \varphi_h \vert_T (V)
\end{align}
where $\vert  A \vert$ is cardinality of sets $A$ and $\mathcal{T}_{h} $ is all collection of $T \in \mathcal{T}_h$ for fixed $V$. One can see that $\mathcal{T}_{h}(V)=\varphi(V)$ at those nodes that are inside of given $T \in \mathcal{T}_{h}$. We set the value of $\mathcal{I}_{\rm av}(\varphi_h)$ is zero at boundary nodes. For the convinience of the reader we restate the relevant lemma from \cite{pencheva2013robust, burman2007continuous, karakashian2003posteriori} that is an important property of such construction. 
\begin{lem}
	Let $\mathcal{T}_h$ be shape regular, let $\varphi_h \in \mathbb{Q}_m(\mathcal{T}_h)$ and let $\mathcal{I}_{\rm av}(\varphi_h)$ be constructed as specified above. Then
	\begin{align}
	\norm{\nabla (\varphi_h- I_{\textrm{av}}(\varphi_h))}^2_{T} \le C \sum_{e \in \hat{\mathcal{E}}_h} h^{-\frac{1}{2}}_e \norm{ \llbracket \varphi_h \rrbracket}^2_e   \label{eq:s_tildepineqaa}
	\end{align}
\end{lem}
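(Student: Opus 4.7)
The plan is to bound the left-hand side element-by-element using a finite-dimensional norm equivalence combined with the specific definition of $\mathcal{I}_{\rm av}$. Set $\psi_h := \varphi_h - \mathcal{I}_{\rm av}(\varphi_h)$; since $\psi_h|_T \in \mathbb{Q}_m(T)$, on the reference element $\hat T$ obtained via the affine map $F_T: \hat T \to T$ the two maps $\hat\psi \mapsto \norm{\nabla \hat\psi}_{\hat T}$ and $\hat\psi \mapsto \bigl(\sum_{\hat V} |\hat\psi(\hat V)|^2\bigr)^{1/2}$ (summed over the Lagrange nodes of $\hat T$) are both seminorms on the finite-dimensional space $\mathbb{Q}_m(\hat T)$ with the same kernel (the constants: they are annihilated by $\nabla$, and adding a constant to $\varphi_h$ leaves $\psi_h$ unchanged, so the nodal values of $\psi_h$ also vanish on constants). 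By standard scaling this yields the local bound
\[
\norm{\nabla \psi_h}^2_T \;\le\; C\, h_T^{d-2} \sum_{V \in \mathcal{N}(T)} |\psi_h(V)|^2,
\]
with $C$ depending only on the shape-regularity constant.

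Second, at each Lagrange node $V \in \mathcal{N}(T)$ the definition \eqref{eq:oswaldavg} gives the telescoping identity
\[
\psi_h(V) \;=\; \frac{1}{|\mathcal{T}_h(V)|} \sum_{T' \in \mathcal{T}_h(V)} \bigl(\varphi_h|_T(V) - \varphi_h|_{T'}(V)\bigr),
\]
with the convention $\varphi_h|_{T'}(V) = 0$ when $V \in \partial \Omega_i$, so that the boundary value is itself a jump against the zero trace. Each term equals the pointwise value of $\llbracket \varphi_h \rrbracket$ at $V$ across a single face, or — for nodes not sitting on a common face of $T$ and $T'$ — across a short chain of faces of uniformly bounded length by shape-regularity. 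An inverse/trace estimate applied to $\llbracket \varphi_h \rrbracket|_e \in \mathbb{Q}_m(e)$ then gives $|\llbracket \varphi_h \rrbracket(V)|^2 \le C\, h_e^{1-d}\, \norm{\llbracket \varphi_h \rrbracket}^2_e$. Inserting this into the previous display, using $h_e \simeq h_T$ by shape-regularity, and summing over the $\mathcal{O}(1)$ nodes of $T$ and $\mathcal{O}(1)$ faces in $\hat{\mathcal{E}}_h$ adjacent to $T$, the $h$-powers collect into the weight appearing on the right-hand side of \eqref{eq:s_tildepineqaa}, completing the argument.

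The main obstacle I anticipate is the combinatorial bookkeeping for Lagrange nodes that lie strictly interior to $T$ or on lower-dimensional mesh entities not directly in $\hat{\mathcal{E}}_h$: these must be rewritten as telescoping combinations of single-face jumps between neighbouring elements using the explicit averaging formula, and one has to verify that the multiplicities $|\mathcal{T}_h(V)|$ and the chain lengths stay bounded by a mesh-independent constant so that no spurious $h$-factors appear in the final estimate. Every other step is a routine application of scaling and inverse inequalities on polynomial spaces.
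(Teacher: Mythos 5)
The paper does not actually prove this lemma: it is restated verbatim from \cite{pencheva2013robust, burman2007continuous, karakashian2003posteriori} and the reader is sent to those references, so there is no internal proof to compare against. Your sketch is the standard argument behind the cited result (local finite-dimensional norm equivalence on the reference element, the telescoping nodal identity from the averaging formula, an inverse estimate on each face, and a shape-regularity count of nodes, faces and chain lengths), and as a self-contained derivation it is essentially sound, including your identification of the only delicate point, namely the bounded-multiplicity bookkeeping for vertex nodes and the zero-trace convention at boundary nodes. Two remarks. First, carry your own scaling through to the end: $\norm{\nabla \psi_h}^2_T \lesssim h_T^{d-2}\sum_V |\psi_h(V)|^2$ combined with $|\llbracket \varphi_h \rrbracket(V)|^2 \lesssim h_e^{-(d-1)}\norm{\llbracket \varphi_h \rrbracket}^2_e$ gives the weight $h_T^{d-2}h_e^{-(d-1)} \simeq h_e^{-1}$, which is exactly the exponent appearing in the references the paper cites; the exponent $h_e^{-1/2}$ printed in the lemma is strictly smaller for $h_e<1$ and is not what your (or the cited) argument delivers, so it appears to be a typo in the paper rather than a defect in your proof. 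Second, your claim that the nodal seminorm of $\psi_h$ ``vanishes on constants'' is not quite right at boundary nodes, where $\mathcal{I}_{\rm av}$ is set to zero and adding a constant to $\varphi_h$ does change $\psi_h$; this does not damage the argument, because the nodal functional $\hat\psi \mapsto \bigl(\sum_{\hat V}|\hat\psi(\hat V)|^2\bigr)^{1/2}$ is in fact a norm on $\mathbb{Q}_m(\hat T)$, so the gradient seminorm is dominated by it without any kernel-matching hypothesis, but the justification should be phrased that way.
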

\noindent for all $T \in \mathcal{T}_h$ and $C$ depends only on the space dimension $d$, on the maximal polynomial degree $n$, and on the shape regularity parameter $\kappa_{\mathcal{T}}$.
Now in our setting we define recovered pressure $s_h$ for the locally post-processed $\tilde{p}_h$ as follows.
\begin{equation*}
s_h:=\mathcal{I}_{\rm av}(\tilde{p}_h)
\end{equation*}


\subsubsection{Implementation steps of construction}
We provide a brief steps of numerical implementation of post-processed pressure in two dimensional case. Based on piecewise pressure and velocity from the lowest order Raviart-Thomas spaces over rectangles our aim to reconstruct smoother pressure $s_h$.
For given element $T \in \mathcal{T}_h(\Omega_i)$, the main steps are
\begin{enumerate}
	\item Evaluate $\lambda_{h, T}$ at edge $e_j$, $j=1,..4$ based on $(\mathbf{u}_h, p_h)$,
	\item Compute $\tilde{p}_h$ from known $\lambda_{h, T}$, and  $p_h$ by using equation (\ref{eq:lagrangemultipliera_fluxreconstr}),
	\item Based on $\tilde{p}_h$ compute $s_h$ equation (\ref{eq:oswaldavg}) at Lagrange nodes in $\Omega_i$.
\end{enumerate}

Step 1 is standard computation of Lagrange multiplier for each element. In step 2, we are relying on higher order polynomial, in our case, it is Span$\{1, x, y, x^2, y^2\}$. It is sufficient to store coefficients of polynomials in the code. In step 3, we use Span$\{1, x, y, x^2, y^2, xy, x^2y, xy^2, x^2y^2\}$ and 9 Lagrange nodes of rectangle elements that are four rectangle nodes, four midpoints at edge and center of rectangle. This case each node requires to find neighboring elements values to compute coefficients of $s_h$.

\section{Numerical Examples}
We conduct several numerical experiments to show a \textit{posteriori} error bounds for two-dimensional flow problems. We set same domain $\Omega = \left(0, 1\right) \times \left(0, 1\right)$ and $H/h =2$, where $H$ is coarse subdomain discretization size and $h$ is fine subdomain discretization size, for all examples. Initial subdomains grids $\mathcal{T}_h$ are chosen in way that has a checkerboard pattern for subdomains. Examples of such discretization are shown in Figure \ref{fig:posteriorisubdomainmesh}. We focus on the flux error estimators which is key in flow and transport modeling and the comparision of actual error $\vertiii{\mathbf{u}-\mathbf{u}_h}_{*}$, which is defined as follows:
$ \vertiii {\mathbf{v}}^2_*:=\norm{\mathbf{K}^{-\frac{1}{2}} \mathbf{v}}^2, \; \mathbf{v} \in L^2(\Omega).$
The localization of the element level computation brings efficiency in the many domains setting for EV scheme methodology. 
In the first numerical example, the residual-based error estimators and implicit error estimators are compared for pressure and velocity errors. Second example shows the post-processed error estimators (implicit) for heterogeneous porous media. In the third example, we demonstrate the advantages of estimator, $\eta_{P}$, as indicator for three by three subdomains.


\begin{figure}[H]
	\centering
	\includegraphics[width=0.45\linewidth]{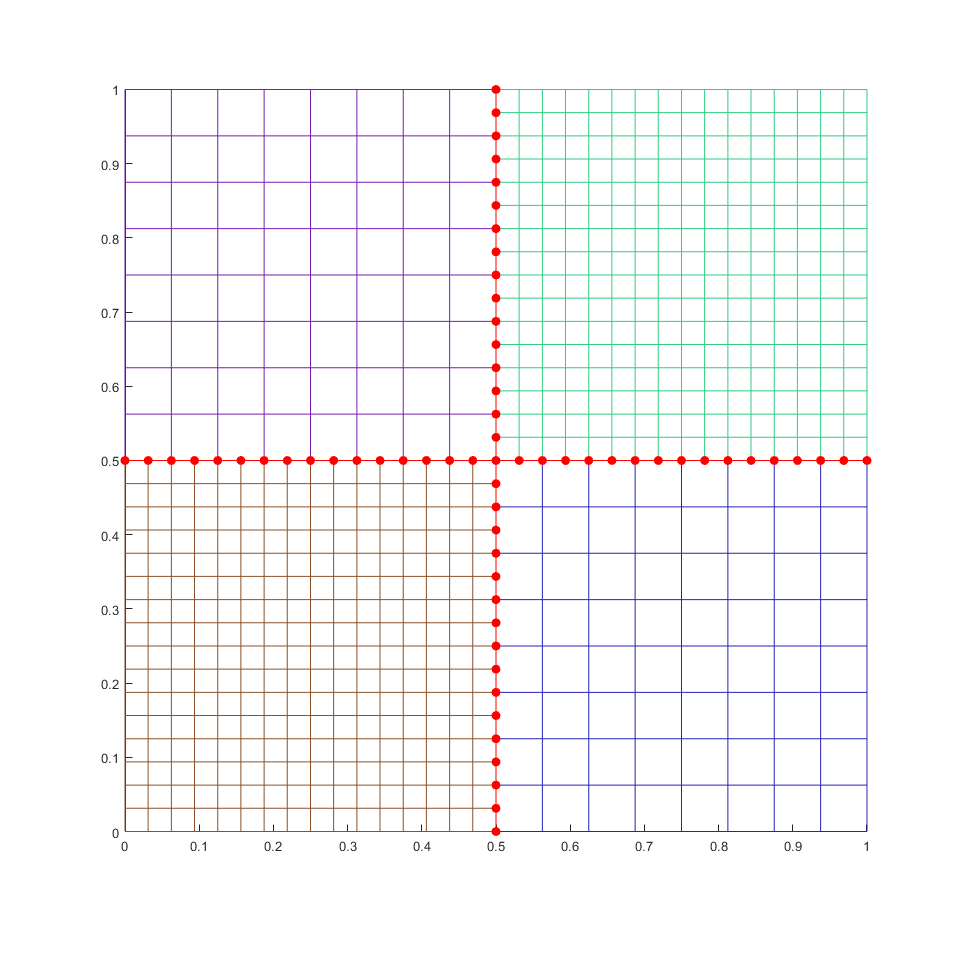}
	\includegraphics[width=0.45\linewidth]{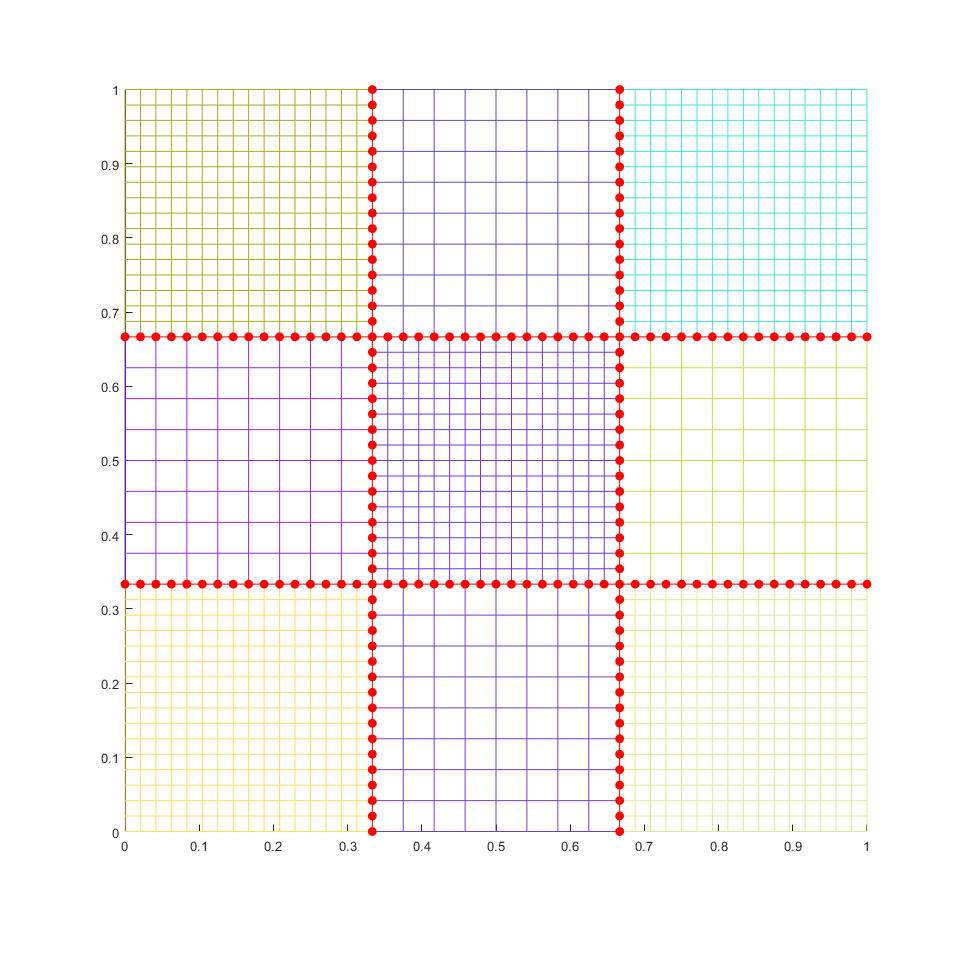}
	\caption{Example of non-matching grids for subdomains.}
	\label{fig:posteriorisubdomainmesh}
\end{figure}

\subsection*{Numerical Example 1}
First example tests for uniform permeability, so  $\mathbf{K} = \mathbf{I}$. We report the velocity error and corresponding a posteriori error estimators. We compute the source term and boundary conditions according to the analytical solution, which is taken as follows
\begin{align*}
p(x,y) = 1000 xy e^{-10(x^2+y^2)}.
\end{align*}

We want to illustrate in this example of the pressure estimator. However, the importance of flux is key in the mesh refinement, since the flow coupled with transport by the flux.

\begin{figure}[H]
	\begin{center}
		\includegraphics[width=0.32\linewidth,trim=1.5cm 0cm 1.4cm 0cm, clip]{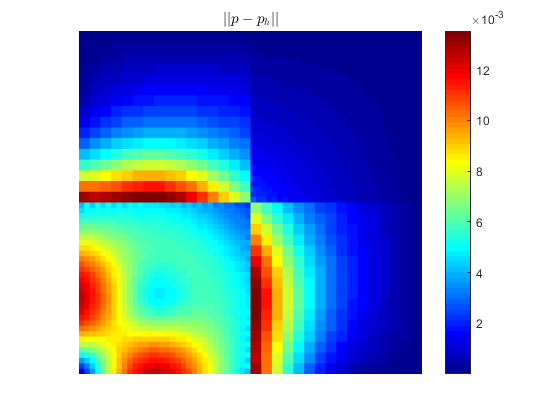}
		\hfill
		\includegraphics[width=0.32\linewidth,trim=1.5cm 0cm 1.4cm 0cm, clip]{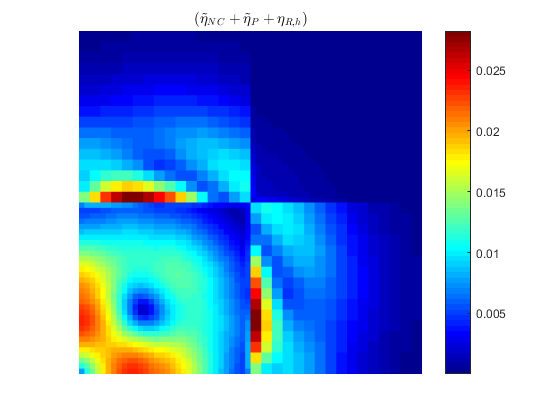}
		\hfill
		\includegraphics[width=0.32\linewidth,trim=1.5cm 0cm 1.4cm 0cm, clip]{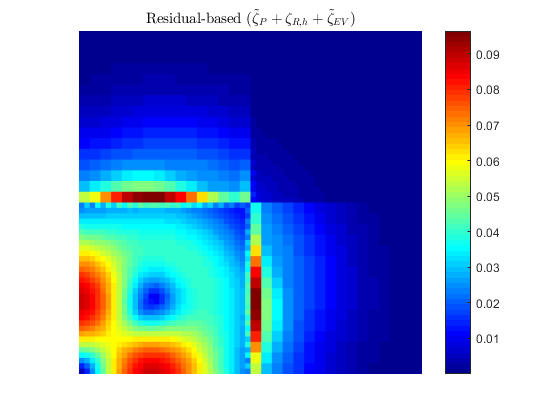}
		\caption{Actual(left), estimated $\eta$ (center) and the residual-based $\zeta $ (right)  pressure error distribution on a uniformly refined mesh.}
		\label{fig:preserror_N10}
	\end{center}
\end{figure}

\begin{figure}[H]
	\begin{center}
		\includegraphics[width=0.32\linewidth,trim=1.5cm 0cm 1.5cm 0cm, clip]{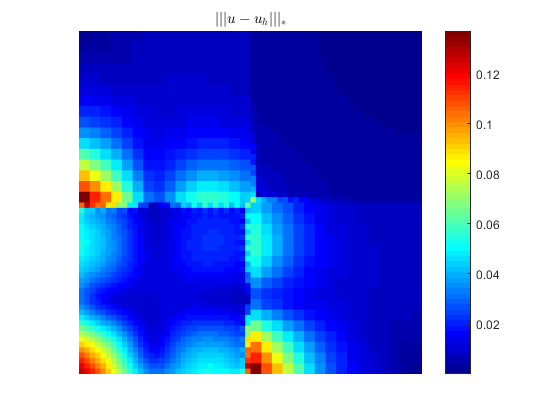}
		\hfill
		\includegraphics[width=0.32\linewidth,trim=1.5cm 0cm 1.5cm 0cm, clip]{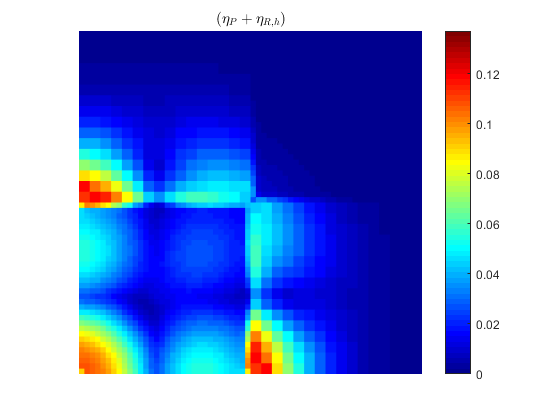}
		\hfill
		\includegraphics[width=0.32\linewidth,trim=1.5cm 0cm 1.5cm 0cm, clip]{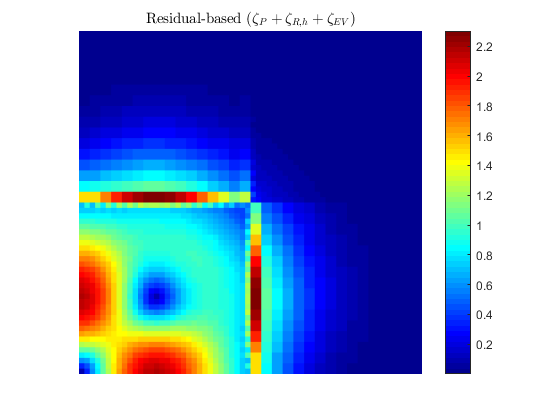}
		\caption{Actual(left), estimated $\eta$ (center) and the residual-based $\zeta $ (right) velocity error distribution on a uniformly refined mesh.}
		\label{fig:fluxerror_N10}
	\end{center}
\end{figure}

As can be seen from Figure \ref{fig:preserror_N10}, the pressure error was detected well by $\eta$ and $\zeta$, however, error estimator with postprocessing is closer to the actual error. We can see from Figure \ref{fig:fluxerror_N10} the residual-based estimators of velocity error are not good in detection of velocity error, on the other hand, the estimator with postprocessing captures the actual error very well. To be specific, not only error within subdomains but also error at the interface are detected. The result indicates that the fine-grid subdomains error is also predicted for further refinement process, which might depends on chosen the marking strategy in adaptive setting. In this case, we note that the interface grids equals to the fine-grid subdomains mesh size and we set the coarser grid to show increased interface error. The residual-based estimators of velocity error are not good in detection of velocity error, see Figure \ref{fig:fluxerror_N10}. 

\begin{figure}[H]
	\begin{center}
		\includegraphics[width=0.6\linewidth,trim=0cm 0cm 0cm 0cm, clip]{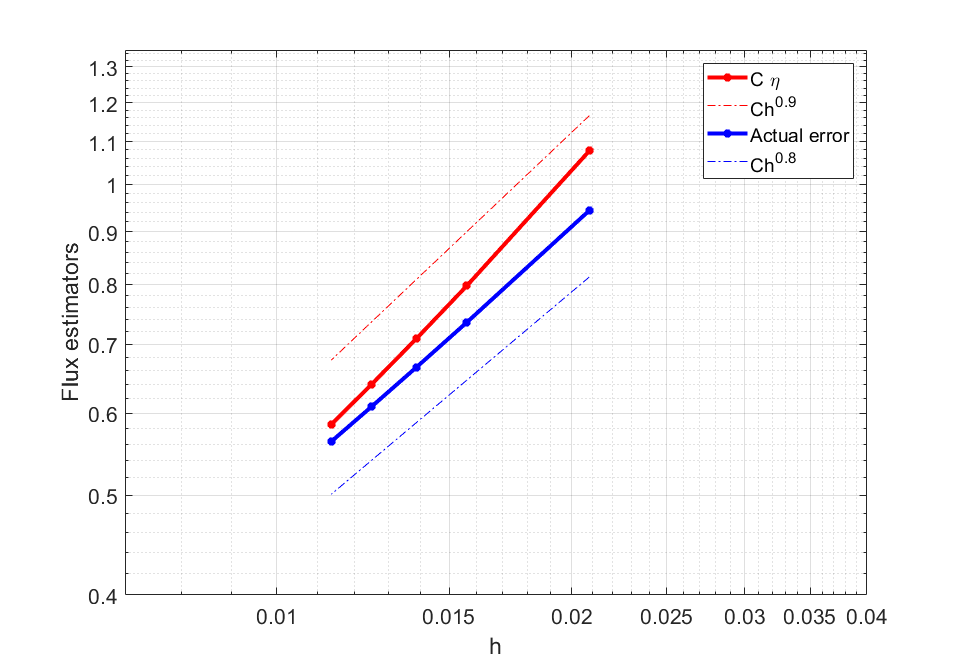}
		\caption{Computed estimates and actual flux error distribution on a uniformly refined mesh.}
		\label{fig:fluxglobalerror_N10}
	\end{center}
\end{figure}

From now we will refer as error estimator to the post-processed error estimators. As can be seen from Figure \ref{fig:fluxglobalerror_N10}, we compare convergence rate of actual and estimated flux errors against to the coarse subdomain mesh size. The coarse subdomain discretization are $h=\{1/48, 1/64, 1/72, 1/80, 1/88\}$ and the fine subdomain discretization is two times smaller, i.e. \\
$h_f=\{1/96, 1/128, 1/144, 1/160, 1/172\}$. The convergence rate are almost the same and it implies that the effectivity indexes, resulting from the ratios of the estimate over the error, is decreasing slowly to constant number. By increasing number of degrees the estimator approaches the actual error at acceptable precision level. 
\begin{figure}[H]
	\begin{center}
		\includegraphics[width=0.6\linewidth,trim=0cm 0cm 0cm 0cm, clip]{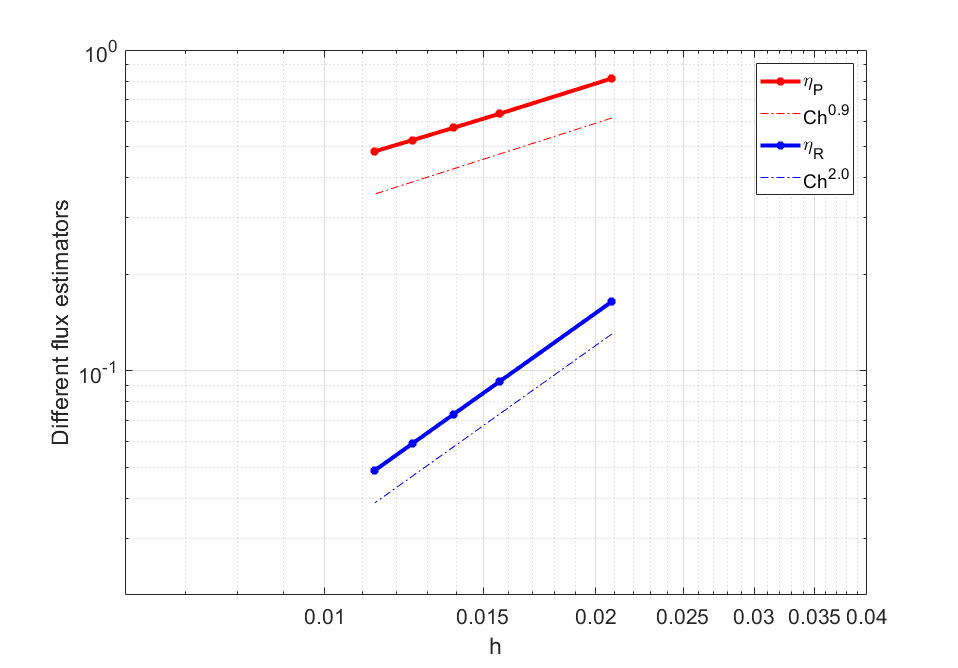}
		\caption{Different flux estimators distribution on a uniformly refined mesh.}
		\label{fig:fluxdifferentestimators_N10}
	\end{center}
\end{figure}
We have also considered different components of flux estimator, i.e. $\eta_P$ and $\eta_{R}$. For visual representation of the convergence rates the reader refers to Figure \ref{fig:fluxdifferentestimators_N10}. From this comparison we can see that the estimator $\eta_P$ and $\vertiii{\mathbf{u}-\mathbf{u}_h}_*$ converges similarly as $\mathcal{O}(h^{0.9})$ and $\mathcal{O}(h^{0.8})$, respectively. It is interesting to notice that $\eta_P$ is a good error indicator, since it is similar to the entire-domain error distribution. On the other hand, the estimator $\eta_{R_h}$ converges as $\mathcal{O}(h^2)$, because $f$ is smooth. We expected that the estimator at interface $\eta_{EV}$ converges  here $\mathcal{O}(h^{0.5})$ which is slower than $\mathcal{O}(h^{0.8})$.

\subsection*{Numerical Example 2}
We consider the a diagonal heterogeneous permeability profiles of the medium
\begin{align*}
\mathbf{K} =
\begin{bmatrix} 
e^{\cos(4 \pi x)\cos(2 \pi y)+3\sin(5 \pi x) \cos(3 \pi y)}    &   0 \\ 
0 &    e^{\cos(4 \pi x)\cos(2 \pi y)+3\sin(5 \pi x) \cos(3 \pi y)}
\end{bmatrix} .
\end{align*}

We compute the source term and boundary conditions according to the analytical solution, which is taken as follows
\begin{align*}
p(x,y) = \sin(\pi x) \sin(\pi y).
\end{align*}

\begin{figure}[H]
	\begin{center}
		\includegraphics[width=0.47\linewidth,trim=1.5cm 0cm 1.5cm 0cm, clip]{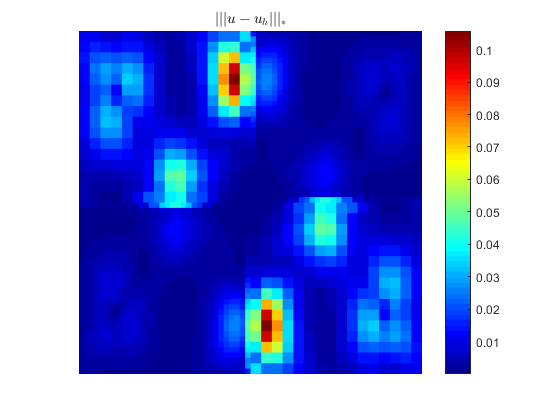}
		\hfill
		\includegraphics[width=0.47\linewidth,trim=1.5cm 0cm 1.5cm 0cm, clip]{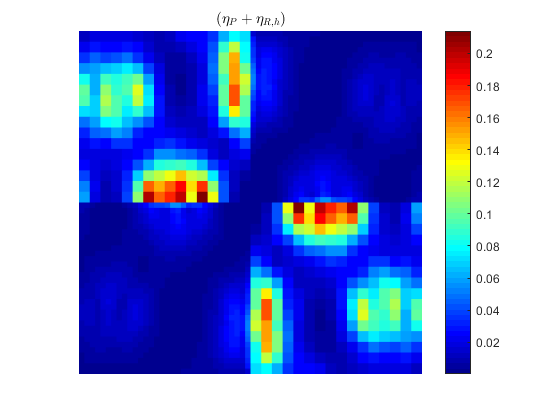}
		\caption{Actual(left) and estimated (right) flux error distribution on a uniformly refined mesh.}
		\label{fig:fluxerrorN4}
	\end{center}
\end{figure}

As shown in Figure \ref{fig:fluxerrorN4}, the actual spatial distribution of the flux error is predicted by the estimator well. The highly heterogeneous porous medium leads to increase of the estimate values. Some part is over predicted since the residual term has large values on high permeability regions compare the potential estimator term.

\begin{figure}[H]
	\begin{center}
		\includegraphics[width=0.6\linewidth,trim=0cm 0cm 0cm 0cm, clip]{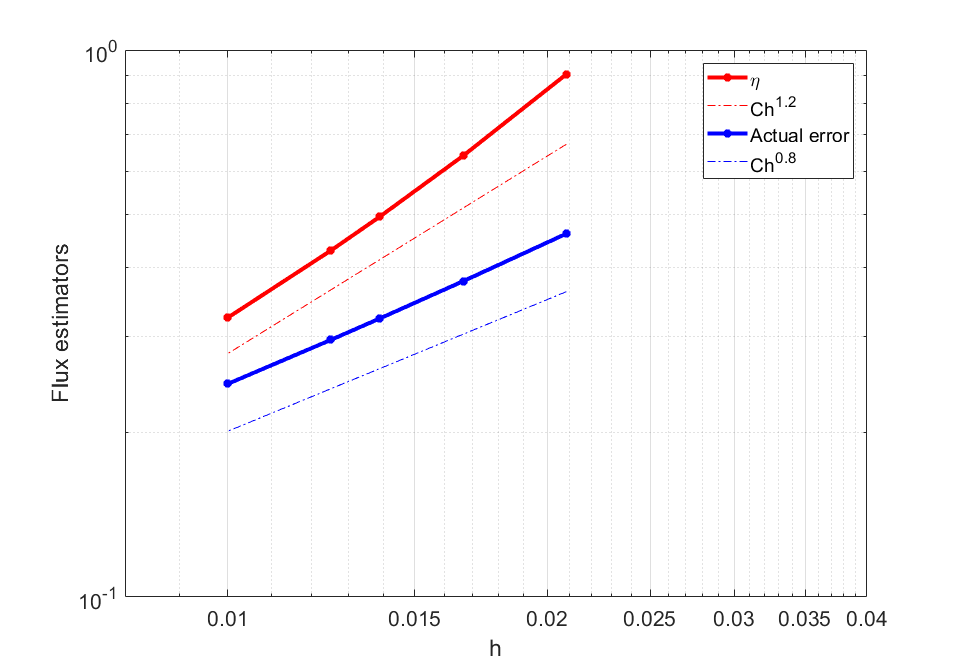}
		\caption{Computed estimates and actual flux error distribution on a uniformly refined mesh.}
		\label{fig:fluxglobalerror_N4}
	\end{center}
\end{figure}

As can be seen from Figure \ref{fig:fluxglobalerror_N4}, the comparison of actual and estimated flux error against to the coarse subdomain mesh size. The estimate is upper bound on the error as predicted by theory. The coarse subdomain discretization are $h=\{1/36, 1/48, 1/64, 1/80, 1/100\}$ and the fine subdomain discretization is two times smaller, i.e. $h=\{1/72, 1/96, 1/128, 1/160, 1/200\}$. The convergence rate are the same and it implies that the effectivity indices, resulting from the ratios of the estimate over the error, is decreasing quickly to constant number.

\begin{figure}[H]
	\begin{center}
		\includegraphics[width=0.6\linewidth,trim=0cm 0cm 0cm 0cm, clip]{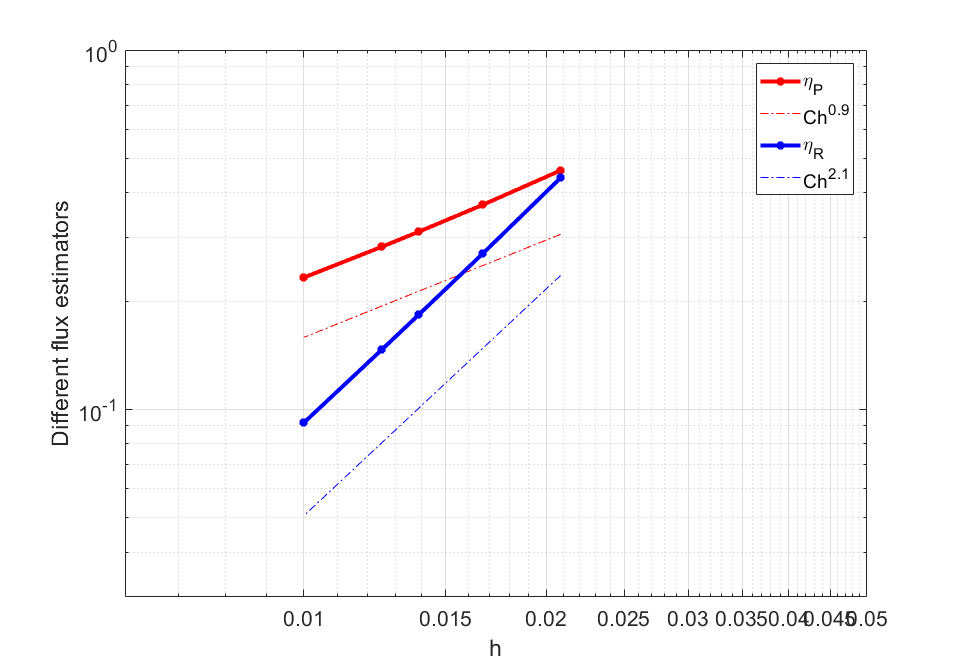}
		\caption{Different flux estimators distribution on a uniformly refined mesh.}
		\label{fig:fluxdifferentestimators_N4}
	\end{center}
\end{figure}

We have also considered different flux estimator components $\eta_P$, $\eta_{R_h}$ and $\eta_{EV}$. For visual representation of the convergence rate the reader refers to Figure \ref{fig:fluxdifferentestimators_N4}. From this comparison we can see that the estimator $\eta_P$ and $\vertiii{\mathbf{u}-\mathbf{u}_h}_*$ converges as similar rate $\mathcal{O}(h^{0.9})$ and $\mathcal{O}(h^{0.8})$, respectively. It is interesting to notice that $\eta_P$ is a good error indicator if you look at the entire-domain error distribution. On the other hand, the estimator $\eta_{R_h}$ converges as $\mathcal{O}(h^{2.2})$ that is much better, because $f$ is smooth.

\subsection*{Numerical Example 3}
In this numerical example, we want to show indicator advantages in the detecting flux error for three by three subdomains, see Figure \ref{fig:posteriorisubdomainmesh}. It is common to see in the subsurface flow problems $\eta_{R}$ term that might have impact near the well location due to source term. The rest of domain $\eta_{P}$ can be considered as indicator of the large error element. Therefore, we suggest to use $\eta_{P}$ as an estimator in the flow and transport problems to detect potential elements for refinement. Advantage of this estimator is a local evaluation of post-processed pressure approximation.  


We consider the absolute permeability as $\mathbf{K} = \mathbf{I}$. The solution of the flow problem is 
\begin{equation*}
p(x, y) = x(x-1)y(y-1)
\end{equation*}
and then corresponding boundary conditions and force term were computed. For visual comparison of error indicator and actual error and the $\eta_P$ the reader referred to Figure \ref{fig:fluxerrorN3}. As we discussed before, the numerical example confirms that the estimator $\eta_{P}$ is a good candidate as indicator in the adaptivity strategy.
\begin{figure}[H]
	\begin{center}
		\includegraphics[width=0.32\linewidth,trim=1.5cm 0cm 1.5cm 0cm, clip]{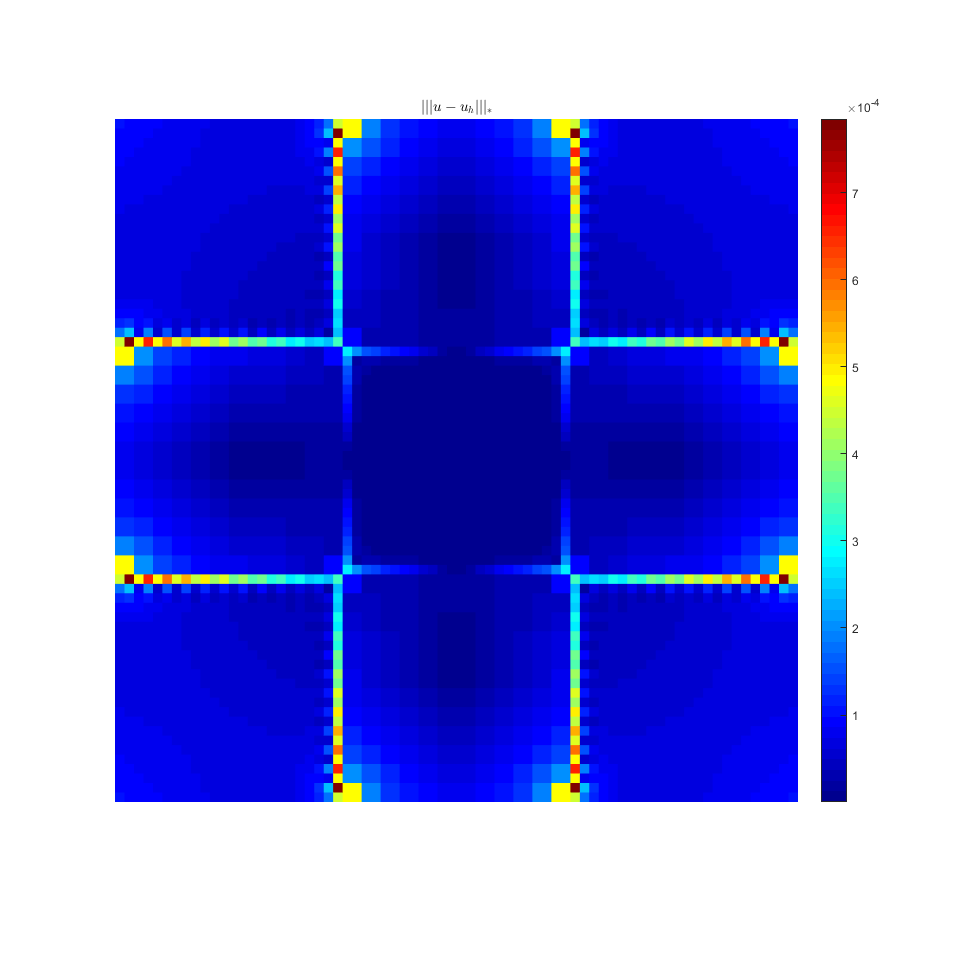}
		\hfill
		\includegraphics[width=0.32\linewidth,trim=1.5cm 0cm 1.5cm 0cm, clip]{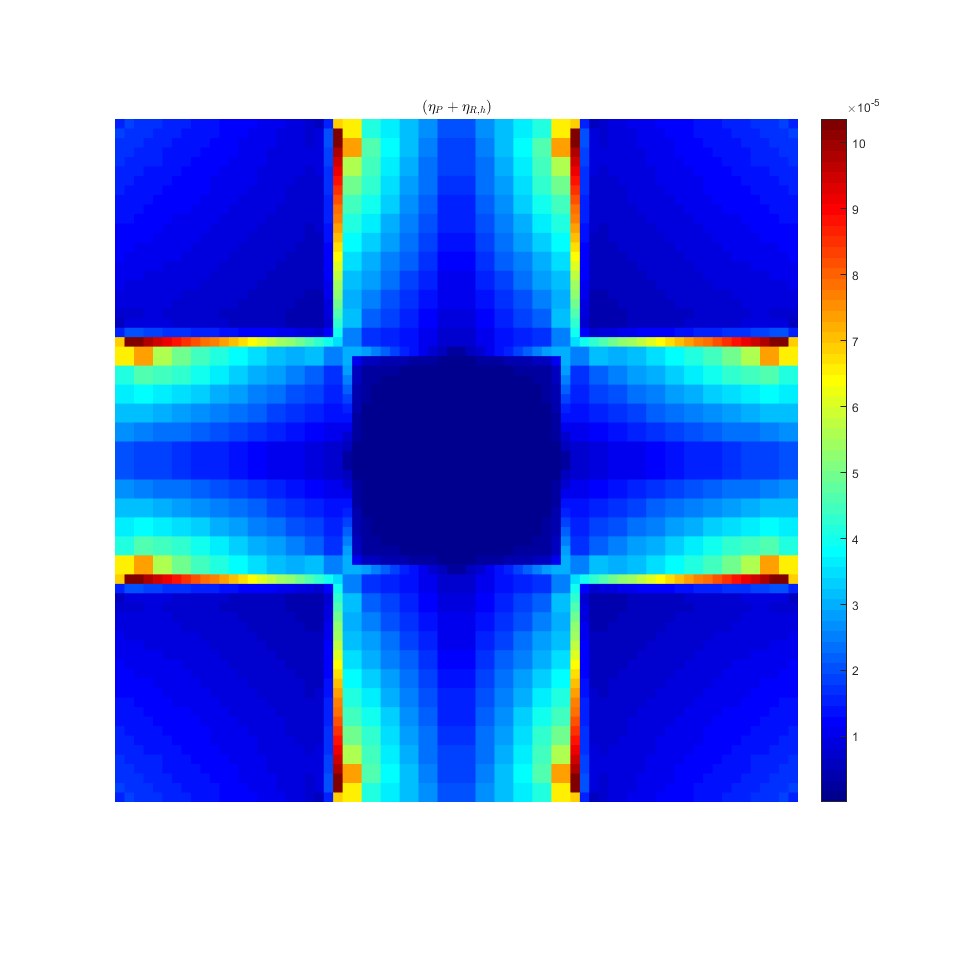}
		\hfill
		\includegraphics[width=0.32\linewidth,trim=1.5cm 0cm 1.5cm 0cm, clip]{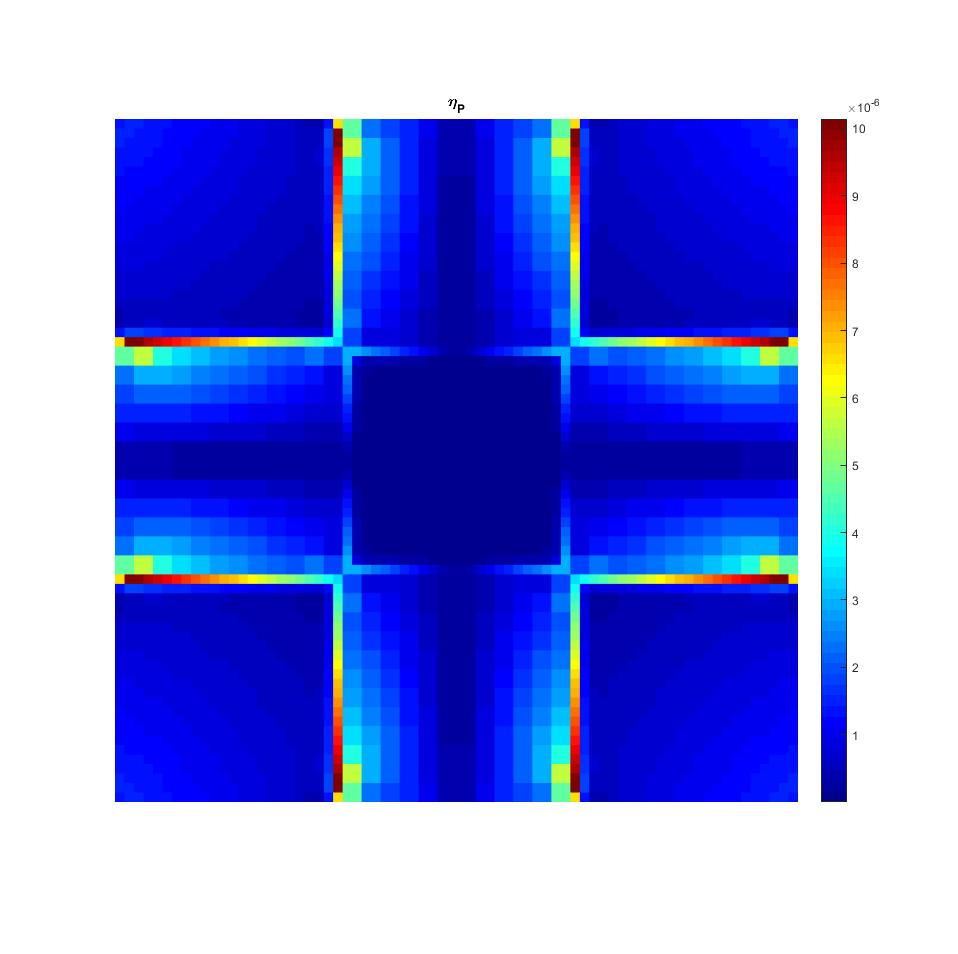}
		\caption{Actual(left), estimated (center) flux error and (right) $\eta_P$ error distribution on a uniformly refined mesh.}
		\label{fig:fluxerrorN3}
	\end{center}
\end{figure}

\section{Conclusions}
The main goal of the present research was to determine a \textit{posteriori} error estimators of Enhanced Velocity discretization for the incompressible flow problems. In this paper, we derived theoretically the explicit residual-based error estimators with the saturation assumption and the implicit error estimators, which is based on post-processed pressure, and confirmed them numerically. For pressure error, the residual-based error estimators indications are similar to the implicit error estimators. On the other hand, the flux error estimators were predicted well by the implicit error estimators. In addition, the numerical result indicates that the flux error can be detected by $\eta_P$. In the flow and transport setting, the flux is key component in coupling and therefore identifying regions of fine scale flux by using a \textit{posteriori} error analysis is key in adaptivity strategy. The findings suggest that the provided error estimates could also be useful for reservoir simulator in industrial applications. In our future research we intend to concentrate on a \textit{posteriori} error analysis of transient problems and the result of \textit{priori} error study has been demonstrated in \cite{amanbek2018priori}.    


\section*{Acknowledgements} 
First author would like to acknowledge support of the Faculty Development Competitive Research Grant (Gran No. 110119FD4502), Nazarbayev University.

\bibliographystyle{unsrt}  
\bibliography{posteriori_manuscript}

\end{document}